\documentclass{amsart}
\usepackage[utf8]{inputenc}
\usepackage{amssymb,amsmath,bm,enumerate,stmaryrd,amsthm,tikz-cd}
\usepackage{stackrel}
\usetikzlibrary{matrix}
\usetikzlibrary{arrows}
\usepackage[all]{xy}
\usepackage[shortlabels]{enumitem}
\usepackage{colonequals}
\usepackage{eucal}

\makeatletter
\@namedef{subjclassname@2020}{\textup{2020} Mathematics Subject Classification}
\makeatother

\usepackage[colorlinks]{hyperref}
\hypersetup{
 colorlinks=true,
 linkcolor=blue,
 filecolor=magenta, 
 urlcolor=cyan,
}
\usepackage[capitalise]{cleveref}
\crefformat{equation}{(#2#1#3)}
\crefrangeformat{equation}{(#3#1#4--#5#2#6)}
\crefformat{enumi}{(#2#1#3)}
\crefrangeformat{enumi}{(#3#1#4--#5#2#6)}

\usepackage{tikz-cd}
\usetikzlibrary{decorations.pathmorphing}

\makeatletter
 \newcommand{\lra}{\longrightarrow}
\newcommand{\xra}{\xrightarrow}

 \newcommand{\ges}{\geqslant}

\newcommand{\dcat}[2][]{\cat{D}_{#1}(#2)}
\newcommand{\dfcat}[2][]{\cat{D}^{\sf{f}}_{#1}(#2)}
\newcommand{\dfbcat}[1]{\dfcat[\sf{b}]{#1}}

\newcommand{\dflbcat}[1]{\cat{D}^{\sf{fl}}_{\sf{b}}(#1)}






\newcommand{\fm}{\mathfrak{m}}

\newcommand{\fp}{\mathfrak{p}}



\newcommand{\vp}{{\varphi}}



\newcommand{\AQC}[4]{\operatorname{D}^{#1}(#2/#3;#4)}
\newcommand{\atiyah}[2]{\mathsf{at}^{#1}(#2)}
\newcommand{\at}[1]{\mathsf{at}^{#1}}
\newcommand{\cat}[1]{{\mathsf{#1}}}

\newcommand{\cotan}[1]{\operatorname{L}({#1})}
\newcommand{\env}[2][]{{#2}_{#1}^{\operatorname{e}}}
\DeclareMathOperator{\Ext}{Ext}
\newcommand{\ext}[1]{\Ext_\vp^i(#1,#1)}

\newcommand{\builds}[1][]{\mathrel{\mathop{\models}\limits_{\vbox to 0.5em{\kern-3\ex@\hbox{$\scriptstyle\,\,#1$}\vss}}}}
\newcommand{\hh}{\operatorname{H}}
\newcommand{\hc}[3][{*}]{\operatorname{HH}^{#1}(#2/#3)}

\DeclareMathOperator{\Hom}{Hom}
\DeclareMathOperator{\id}{id}
\DeclareMathOperator{\Image}{Im}
\newcommand{\Ker}{\operatorname{Ker}}

\newcommand{\lotimes}{\otimes^{\operatorname{L}}}

\newcommand{\pdim}{\operatorname{proj\,dim}}

\newcommand{\RHom}{\operatorname{RHom}}
\newcommand{\shift}{{\sf\Sigma}}
\newcommand{\supp}{\operatorname{supp}}
\DeclareMathOperator{\thick}{thick}
\DeclareMathOperator{\Thick}{Thick}
\DeclareMathOperator{\Tor}{Tor}
\DeclareMathOperator{\depth}{depth}

\newcommand{\fbuilds}[1][]{\mathrel{\mathop{\models}\limits_{\vbox to 0.5em{\kern-3\ex@\hbox{$\scriptstyle\,\,#1$}\vss}}}}
\makeatother


\newcounter{intro}
\newtheorem{introthm}[intro]{Theorem}

\newtheorem{theorem}[subsection]{Theorem}

\newtheorem{lemma}[subsection]{Lemma}
\newtheorem{corollary}[subsection]{Corollary}

\theoremstyle{definition}
\newtheorem{example}[subsection]{Example}
\newtheorem{chunk}[subsection]{}

\theoremstyle{remark}
\newtheorem{remark}[subsection]{Remark}

\setcounter{secnumdepth}{2}

\numberwithin{equation}{subsection}


\title[Exceptional complete intersection maps]{Exceptional complete intersection\\ maps of local rings}

\author[S.B.~Iyengar]{Srikanth~B.~Iyengar}
\address{Srikanth~B.~Iyengar,
Department of Mathematics,
University of Utah,
Salt Lake City, UT 84112,
U.S.A.}
\email{iyengar@math.utah.edu}

\author[J.C.~Letz]{Janina~C.~Letz}
\address{Janina~C.~Letz,
Faculty of Mathematics,
Bielefeld University,
PO Box 100 131,
33501 Bielefeld,
Germany}
\email{jletz@math.uni-bielefeld.de}

\author[J.~Liu]{Jian Liu}
\address{Jian Liu,
School of Mathematical Sciences,
Shanghai Jiao Tong University, Shanghai 200240,
P.R. China.}
\email{liuj231@sjtu.edu.cn}

\author[J.~Pollitz]{Josh Pollitz}
\address{Josh Pollitz,
Department of Mathematics,
University of Utah,
Salt Lake City, UT 84112,
U.S.A.}
\email{pollitz@math.utah.edu}

\date{22 February 2022}

\keywords{exceptional complete intersection, Hochschild cohomology, truncated Atiyah class, thick subcategory}

\subjclass[2020]{13B10 (primary); 13D09, 13D03 (secondary)}

\thanks{This work was partly supported by National Science Foundation DMS grants 2001368 (SBI) and 2002173 (JP) and by the Alexander von Humboldt Foundation in the framework of an Alexander von Humboldt Professorship endowed by the German Federal Ministry of Education and Research (JCL)}

\begin{document}

\begin{abstract}
This work concerns surjective maps $\varphi\colon R\to S$ of commutative noetherian local rings with kernel generated by a regular sequence that is part of a minimal generating set for the maximal ideal of $R$. The main result provides criteria for detecting such exceptional complete intersection maps in terms of the lattices of thick subcategories of the derived category of complexes of finite length homology. A key input is a characterization of such maps in terms of the truncated Atiyah class of $\varphi$.
\end{abstract}

\maketitle


\section*{Introduction}
This work is part of an ongoing project to understand properties of commutative noetherian rings, and of maps between them, in terms of the triangulated structure of their derived categories. Building on the work of Dwyer, Greenlees and the first author~\cite{Dwyer/Greenlees/Iyengar:2006b}, the fourth author~\cite{Pollitz:2019} established a homotopical description of locally complete intersection rings: These are the commutative noetherian rings with the property that any nonzero thick subcategory of its bounded derived category contains a nonzero perfect complex. Similar homotopical characterizations of the complete intersection property have since appeared in the works of Briggs, Grifo and the authors~\cite{Briggs/Grifo/Pollitz:2021,Briggs/Iyengar/Letz/Pollitz:2021,Letz:2021}; the most general result is contained in \cite{Briggs/Iyengar/Letz/Pollitz:2021}. 

A surjective map $\vp\colon R\to S$ of local rings is complete intersection if its kernel is generated by an $R$-regular sequence. The focus of the present manuscript is on a special class of complete intersection maps where, in addition, the regular sequence is part of a minimal generating set for the maximal ideal of $R$. Such maps arise naturally in various contexts; for example, the diagonal of a smooth map has this property locally. They also arose in recent work of the first author with Brochard and Khare~\cite{Brochard/Iyengar/Khare:2021}, where they are called \emph{exceptional complete intersection} maps; we adopt this terminology. 

Our motivation for the present undertaking comes from \cite{Briggs/Iyengar/Letz/Pollitz:2021} that raises the prospect of defining locally complete intersection maps in the broader setting of tensor triangulated categories. However to realize this goal requires a homotopical description of the diagonal of a smooth map, and more generally, of a weakly regular map. Our main result provides such a description for general exceptional complete intersection maps; see also the discussion in \cref{dflasatriangulatedcat}. Let $\dflbcat{R}$ be the subcategory of the derived category of $R$ consisting of complexes with finite length homology. The statement involves the functor $\vp_*\colon \dflbcat{S}\to \dflbcat{R}$ induced by restriction along $\vp$. 

\begin{introthm}
\label{introthm2}
Let $\vp\colon R\to S$ be a surjective map of commutative noetherian local rings. Then $\vp$ is exceptional complete intersection if and only if 	$\pdim_R S$ is finite and $\vp_*$ induces an isomorphism between the lattice of thick subcategories of $\dflbcat{S}$ and that of $\dflbcat{R}$.
\end{introthm}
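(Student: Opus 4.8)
The plan is to derive both implications from the characterization of exceptional complete intersection maps in terms of the truncated Atiyah class of $\vp$, which I will use in the following form: $\vp$ is exceptional complete intersection if and only if $c\colonequals\pdim_R S$ is finite and the derived fibre functor $S\lotimes_R(-)$ splits naturally on $\dflbcat{S}$ as $\bigoplus_{i=0}^{c}\shift^{i}(-)^{\binom{c}{i}}$; equivalently, evaluating at $k$, if and only if $c<\infty$ and $k\in\thick_S(S\lotimes_R k)$. Write $\vp^{*}=S\lotimes_R(-)$ and $\vp_{*}$ for restriction of scalars; when $\pdim_R S<\infty$ these are adjoint exact functors between $\dflbcat{S}$ and $\dflbcat{R}$, and each of these categories is the thick closure of $k$. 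Two elementary facts will be used throughout. \emph{(i)} When $\pdim_R S<\infty$, the complex $\vp_{*}S$ is perfect and nonzero over $R$; hence $\vp_{*}\vp^{*}X\simeq\vp_{*}S\lotimes_R X\in\thick_R(X)$ for $X\in\dflbcat{R}$, and $\vp_{*}S\lotimes_R k$ is a nonzero direct sum of shifts of $k$, so $k$ is a retract of it. \emph{(ii)} If $\vp$ is complete intersection with $\ker\vp$ generated by a regular sequence $\bmf=f_1,\dots,f_c$, then $\vp_{*}S\lotimes_R X\simeq\kos{X}{\bmf}$, and for every $X\in\dflbcat{R}$ and $f\in\fm_R$ one has $X\in\thick_R(\kos{X}{f})$: indeed $\operatorname{End}_{\dcat{R}}(X)$ has finite length, so $f$ acts nilpotently and $\kos{X}{f^{N}}\simeq X\oplus\shift X$ for $N\gg0$, while $\kos{X}{f^{N}}\in\thick_R(\kos{X}{f})$ by iterated octahedra; running over the entries of $\bmf$ gives $X\in\thick_R(\kos{X}{\bmf})$.

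For the forward implication, suppose $\vp$ is exceptional complete intersection. Finiteness of $\pdim_R S$ is the Koszul resolution of $S$. For the lattice isomorphism I would show that the induced map $\Phi(\mathcal{T})=\thick_R(\vp_{*}\mathcal{T})$ is a bijection, with inverse $\Psi(\mathcal{U})=\{M\in\dflbcat{S}:\vp_{*}M\in\mathcal{U}\}$; both are order-preserving, and $\Psi(\mathcal U)$ is thick because $\vp_{*}$ is exact. The identity $\Phi\Psi=\mathrm{id}$ uses only \emph{(i)} and \emph{(ii)}: given $X\in\mathcal{U}$, set $M=\vp^{*}X$; then $\vp_{*}M\simeq\vp_{*}S\lotimes_R X\in\thick_R(X)\subseteq\mathcal{U}$, so $M\in\Psi(\mathcal{U})$, and $X\in\thick_R(\kos{X}{\bmf})=\thick_R(\vp_{*}M)\subseteq\Phi\Psi(\mathcal{U})$. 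The identity $\Psi\Phi=\mathrm{id}$ is where the hypothesis \emph{exceptional} is essential: if $\vp_{*}M\in\thick_R(\vp_{*}\mathcal{T})$, apply the exact functor $\vp^{*}$ to obtain $\vp^{*}\vp_{*}M\in\thick_S(\{\vp^{*}\vp_{*}N:N\in\mathcal{T}\})$; by the Atiyah class criterion $\vp^{*}\vp_{*}N\simeq\bigoplus_i\shift^{i}N^{\binom{c}{i}}$ lies in $\thick_S(N)\subseteq\mathcal{T}$, so $\vp^{*}\vp_{*}M\in\mathcal{T}$, and the same splitting realizes $M$ as a retract of $\vp^{*}\vp_{*}M$, whence $M\in\mathcal{T}$. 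Thus $\Phi$ and $\Psi$ are mutually inverse lattice isomorphisms.

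For the converse, assume $\pdim_R S<\infty$ and that $\Phi$ is a lattice isomorphism. Since $\vp_{*}k=k$, the subcategory $\Phi(\dflbcat{S})=\thick_R(\vp_{*}\dflbcat{S})$ contains $k$ and hence equals $\dflbcat{R}$; so $\Phi$ carries the top element to the top element. By \emph{(i)}, $\Phi(\thick_S(S\lotimes_R k))=\thick_R(\vp_{*}S\lotimes_R k)=\thick_R(k)=\dflbcat{R}$ as well. Injectivity of $\Phi$ now forces $\thick_S(S\lotimes_R k)=\dflbcat{S}$, that is, $k\in\thick_S(S\lotimes_R k)$; together with $\pdim_R S<\infty$ this is exactly the Atiyah class criterion for $\vp$ to be exceptional complete intersection.

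The main obstacle — the part I expect to require the most work — is the equality $\Psi\Phi=\mathrm{id}$ in the forward direction, i.e. that $\vp_{*}$ reflects membership in thick subcategories. Everything there is funnelled through the natural splitting $\vp^{*}\vp_{*}(-)\simeq\bigoplus_i\shift^{i}(-)^{\binom{c}{i}}$ on $\dflbcat{S}$, which is the concrete manifestation of the vanishing of the truncated Atiyah class of $\vp$. Proving that equivalence — and, within it, bootstrapping from the splitting at $k$ to a statement natural in all of $\dflbcat{S}=\thick_S(k)$ via dévissage along the functoriality of the Atiyah class — is the technical heart of the argument, and is precisely what the Atiyah class input is there to supply.
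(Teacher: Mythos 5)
Your reduction is built on a criterion that is false, and the paper's own example refutes it. You assert that $\vp$ exceptional complete intersection forces a (natural) splitting $S\lotimes_R N\simeq\bigoplus_{i}\shift^{i}N^{\binom{c}{i}}$ for every $N$ in $\dflbcat{S}$, and the whole of $\Psi\Phi=\mathrm{id}$ is funnelled through it (both to place $\vp^*\vp_*N$ in $\mathcal T$ and to exhibit $M$ as a retract of $\vp^*\vp_*M$). But the Atiyah-class characterization of exceptional complete intersections (Theorem~\ref{characterization1}) concerns only the residue field: $\atiyah{\vp}{k}=0$. For other finite-length complexes the truncated Atiyah class need not vanish: in Example~\ref{exp:Nilpotencydegree} the map $R=k\llbracket x,y,z\rrbracket/(x^2-yz)\to S=R/(z)$ is exceptional complete intersection, yet for $N=S/(x,y^n)\in\dflbcat{S}$ the action of $\hc[2]{S}{R}$ on $N$ is nontrivial (it is nilpotent, of unbounded degree as $n$ grows). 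Since $\hc[2]{S}{R}$ acts trivially on $S\lotimes_R N$ and triviality of the action passes to direct summands, any isomorphism $S\lotimes_R N\simeq N\oplus\shift N$ in $\dcat S$ would force the action on $N$ to be trivial; hence no such splitting exists, natural or otherwise. So the step you yourself flag as the technical heart fails. What is true, and what the paper proves (Lemma~\ref{NilpotencyAscentDescent}), is strictly weaker but sufficient: once $\hc[2]{S}{R}$ acts trivially on $k$, it acts \emph{nilpotently} on every object of $\dflbcat{S}=\thick_S(k)$ (nilpotence cuts out a thick subcategory), and nilpotence of the action on $N$ already gives $N\in\thick_S(S\lotimes_R N)$ — proved by factoring $\vp$ into codimension-one maps and taking cones of powers of the Eisenbud operator via iterated octahedra, with no retraction of $N$ off $S\lotimes_R N$ anywhere. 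Your $\Phi\Psi=\mathrm{id}$ argument, by contrast, is fine: facts (i) and (ii) are correct, and your Koszul-nilpotence argument is an adequate elementary substitute for the Hopkins--Neeman input the paper uses there.

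The converse has a second gap of the same flavour. Having extracted from the lattice isomorphism only the single relation $k\in\thick_S(S\lotimes_R k)$, you declare this ``exactly the Atiyah class criterion.'' It is not: membership of $k$ in $\thick_S(S\lotimes_R k)$ gives nilpotence of the $\hc[2]{S}{R}$-action on $k$, but upgrading that to $\atiyah{\vp}{k}=0$ and then to the injectivity of $I/\fm_RI\to\fm_R/\fm_R^2$ uses Lemma~\ref{mainm}, where the equality $\Image\psi=\chi_k(\hc[2]{S}{R})$ requires $\vp$ to be complete intersection; freeness of $I/I^2$ is not available for free. The paper establishes the complete intersection property first, using the full strength of the hypothesis on the objects $M_1,\ldots,M_n$ of \ref{c:cicharacterization} (cones on generators of $\Ker\pi^2(\vp)$, via \cite[Theorem~3.12]{Briggs/Iyengar/Letz/Pollitz:2021}), and only then concludes via Theorem~\ref{characterization1}. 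Your hypothesis is strong enough — a lattice isomorphism immediately yields ascent of finite building on all of $\dflbcat{S}$ (if $\vp_*N\in\thick_R(\vp_*M)$ then $N\in\Psi\Phi(\thick_S M)=\thick_S M$), which is precisely what Theorem~\ref{characterization2} needs — but your reduction to the single statement at $k$ discards that strength and leaves the implication unproved.
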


See \cref{cor:latticeiso} for a more precise statement. The exceptional complete intersection property does not localize and so,  unsurprisingly, our characterization is in terms of the complexes supported only at the maximal ideals of $R$ and $S$. One cannot, in general, relax this hypothesis to allow thick subcategories with bigger support; see \cref{exp:Nilpotencydegree} and \cref{NilpotencyAscentDescent}. It is also worth pointing out that the characterizations of the complete intersection property in \cite{Briggs/Grifo/Pollitz:2021,Briggs/Iyengar/Letz/Pollitz:2021,Pollitz:2019} use objects from $\dflbcat{R}$ in an essential way. 

Questions about the thick subcategories of $\dfbcat{R}$, the bounded derived category of $R$, can often be reduced to those about the thick subcategories of $\dflbcat{R_{\fp}}$ as $\fp$ ranges over the prime ideals in $R$; this is the upshot of local-to-global principles from \cite{Benson/Iyengar/Krause:2015}. When the ring $R$ is Cohen--Macaulay, \cref{introthm2} allows one to further reduce to the case of artinian rings. This idea is illustrated in \cref{cor:artinian}.

To each $S$-complex $M$ there is a naturally defined morphism in $\dcat S$ called the \emph{truncated Atiyah class of $\vp$ at $M$}, denoted $\atiyah{\vp}{M}$, which was introduced in \cite{Huybrechts/Thomas:2010}; see \@ \cref{truncatedAtiyah}. The following characterization of exceptional complete intersection maps in terms of the truncated Atiyah class of $k$ is an input in our proof of \cref{introthm2}.

\begin{introthm}
\label{introthm1}
Let $\vp\colon R\to S$ be a surjective map of commutative noetherian local rings, and let $k$ be their common residue field. Then $\vp$ is exceptional complete intersection if and only if $\pdim_R S$ is finite and $\atiyah{\vp}{k}=0$.
\end{introthm}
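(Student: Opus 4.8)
The plan is to pass to an explicit differential graded model of $\vp$, in which $\atiyah{\vp}{k}$ is controlled by the cohomology operators of $\vp$, and then to read off both implications from the Jacobi--Zariski exact sequence of homotopy Lie algebras. First I would set the finiteness hypothesis aside: it is assumed outright in one direction, while in the other a complete intersection surjection is resolved over $R$ by a Koszul complex, so throughout one may take $\pdim_R S<\infty$. Write $I=\Ker\vp$ and fix a differential graded $R$-algebra resolution $P\xra{\simeq}S$, say the acyclic closure; then $S\lotimes_R S\simeq S\otimes_R P$ as differential graded $S$-algebras, and the truncated cotangent complex $\tau_{\leqslant 1}\cotan{S/R}$ is $\shift(I/I^2)$. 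As recalled in \cref{truncatedAtiyah}, there is a universal truncated Atiyah class $\at{\vp}\in\operatorname{HH}^{2}(S/R;I/I^{2})=\Ext^2_{S\lotimes_R S}(S,I/I^{2})$ of which $\atiyah{\vp}{M}$ is the value under $M\lotimes_S(-)$; in particular $\atiyah{\vp}{k}\in\Ext^2_S(k,k\lotimes_S I/I^{2})$ is the image of $\at{\vp}$. Since $\pdim_R S<\infty$ makes $\operatorname{HH}^{2}(S/R;I/I^{2})$ a finitely generated $S$-module, Nakayama's lemma shows that $\atiyah{\vp}{k}=0$ if and only if $\at{\vp}=0$, and then $\atiyah{\vp}{M}=0$ for every $S$-complex $M$. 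So the question is when $\at{\vp}=0$.

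The central case is when $\vp$ is already known to be complete intersection. Then $I/I^2$ is $S$-free, so $\atiyah{\vp}{k}$ is a $k$-linear map $\pi^2(\vp)=(I/\fm_R I)^\vee\to\Ext^2_S(k,k)$; and the computation in \cref{truncatedAtiyah}---this is where one checks that the truncated Atiyah class of $k$ is the tuple of relative cohomology operators---identifies it with the composite $\pi^2(\vp)\to\pi^2(S)\hookrightarrow\Ext^2_S(k,k)$ coming from the low-degree part of the Jacobi--Zariski sequence
\[
0\to\pi^1(S)\to\pi^1(R)\to\pi^2(\vp)\to\pi^2(S)\to\pi^2(R)\to\cdots .
\]
Hence $\atiyah{\vp}{k}=0$ exactly when $\pi^2(\vp)\to\pi^2(S)$ vanishes. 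In this sequence $\pi^1(S)\to\pi^1(R)$ is injective with cokernel of $k$-dimension $\dim_k\bigl((I+\fm_R^2)/\fm_R^2\bigr)$, so $\pi^2(\vp)\to\pi^2(S)$ has rank $\mu(I)-\dim_k\bigl((I+\fm_R^2)/\fm_R^2\bigr)$; this rank is $0$ precisely when $I\cap\fm_R^2=\fm_R I$, that is, when a minimal generating set of $I$ is part of a minimal generating set of $\fm_R$. So, for a complete intersection surjection, $\atiyah{\vp}{k}=0$ is equivalent to being exceptional.

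It remains to reduce to the central case. For the forward implication, an exceptional complete intersection map is in particular complete intersection, and being exceptional it then falls under the central case, so $\atiyah{\vp}{k}=0$. Conversely, suppose $\pdim_R S<\infty$ and $\atiyah{\vp}{k}=0$, hence $\at{\vp}=0$. Vanishing of the truncated Atiyah class is the primary obstruction to $\vp$ being complete intersection; arguing as in \cref{truncatedAtiyah}---using $\pdim_R S<\infty$ to bound $\Tor^R_\bullet(S,S)$ and so rule out the higher obstructions---one concludes that $\vp$ is complete intersection, equivalently that $I/I^2$ is $S$-free and $\Tor^R_\bullet(S,S)$ is the exterior algebra on it. (Alternatively this step can be routed through the homotopical characterizations of complete intersection maps in \cite{Briggs/Iyengar/Letz/Pollitz:2021}.) The central case now shows that $\vp$ is exceptional.

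The step I expect to be the real obstacle is twofold. First is the exact identification of $\atiyah{\vp}{k}$ with the Jacobi--Zariski map $\pi^2(\vp)\to\pi^2(S)$: this demands matching the construction of the truncated Atiyah class against an acyclic closure of $k$ and against the change-of-rings spectral sequences relating $\Ext_S(k,k)$, $\Ext_R(k,k)$ and $\pi^*(\vp)$, and is where the bulk of \cref{truncatedAtiyah} goes. Second is the deduction that $\at{\vp}=0$ together with $\pdim_R S<\infty$ forces $\vp$ to be complete intersection; here finiteness of $\pdim_R S$ is indispensable, as it is precisely what promotes the formal exterior-algebra comparison to the freeness of $I/I^2$. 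Everything else is bookkeeping with the exact sequence displayed above.
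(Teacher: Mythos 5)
The forward direction and your ``central case'' are essentially the paper's argument: the identification of $\atiyah{\vp}{k}$ with the connecting map in the Jacobi--Zariski sequence is \cref{mainm} (via diagram \cref{maindiagram}), and your rank count is the dual of the paper's exactness argument in the proof of \cref{characterization1}. The genuine gap is in your reduction for the converse, and it occurs at the very first move: the claim that, because $\operatorname{HH}^2(S/R;I/I^2)$ is a finitely generated $S$-module, Nakayama's lemma gives ``$\atiyah{\vp}{k}=0$ iff $\at{\vp}=0$, and then $\atiyah{\vp}{M}=0$ for every $M$.'' This is false. The passage from $\at{\vp}$ to $\atiyah{\vp}{k}$ is not reduction modulo $\fm_S$ of an element of a finitely generated module; it is the map on Ext groups induced by the functor $-\lotimes_S k\colon \dcat{\env[R]S}\to\dcat{S}$, whose target $\Ext^2_S(k,I/I^2\lotimes_Sk)$ is in general much larger than $\operatorname{HH}^2(S/R;I/I^2)\otimes_Sk$, and no Nakayama-type statement applies. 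Concretely, \cref{exp:Nilpotencydegree} gives an exceptional complete intersection map $\vp$ (so $\atiyah{\vp}{k}=0$ by the very theorem at hand) for which $\atiyah{\vp}{M}\ne 0$---indeed non-nilpotent---for $M=S/(x)$; hence $\at{\vp}\ne0$ there, and the paper's introduction explicitly warns that vanishing of $\atiyah{\vp}{k}$ cannot be upgraded to vanishing of all $\atiyah{\vp}{M}$ (see also \cref{ch:faithful,exampleregular}, where non-vanishing of $\at{\vp}$ is tied to the lifting problem).

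Because of this, your converse collapses: you need $\vp$ complete intersection before the central case applies, and your only route to it is ``$\at{\vp}=0$ plus finite projective dimension rules out the higher obstructions,'' which both rests on the false premise and is itself far from bookkeeping (deducing that $I$ is generated by a regular sequence from freeness-type conditions on $I/I^2$ under finite projective dimension is in the territory of the conormal-module conjecture, not a routine truncation argument). The paper avoids the issue entirely: from $\atiyah{\vp}{k}=0$ one gets only $\psi=0$, hence $\eth=0$ in the Jacobi--Zariski sequence of \cref{AQfacts}, hence injectivity of $I/\fm_RI\to\fm_R/\fm_R^2$; then \cref{eci-fpd} shows that this injectivity together with $\pdim_RS<\infty$ already forces $\vp$ to be exceptional complete intersection, by peeling off one regular element not in $\fm_R^2$ at a time (prime avoidance, Nagata's theorem \cref{ch:Nagata}, and induction). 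If you replace your Nakayama-plus-complete-intersection detour by an argument of this kind, the rest of your outline goes through.
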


This result is part of \cref{characterization1}. It is essentially contained in \cite[Proposition~2.8]{Avramov:1989} and \cite[3.11]{Briggs/Iyengar/Letz/Pollitz:2021}, though couched in terms of actions of Hochschild cohomology. Our contribution is to clarify the connection with the Atiyah class; see, especially, \cref{mainm}. 

\cref{introthm1} also sheds light on the faithfulness (or lack thereof) of the restriction functor $\vp_*\colon \dfbcat{S}\to \dfbcat{R}$, at least when $\vp$ is complete intersection. In this case, $\vp_*$ is faithful if and only if $\atiyah{\vp}{M}$ is zero for each $M$ in $\dcat{S}$; see \cref{ch:faithful}. This holds, for example, when $\vp$ is an algebra retract, but not always, even for exceptional complete intersection maps. This is connected to the ``lifting problem"; see \cref{exampleregular}. This connection also makes it clear that in \cref{introthm1} the vanishing of $\atiyah{\vp}{k}$ cannot be replaced with the stronger conclusion that $\atiyah{\vp}{M}$ is zero for each $M$ in $\dcat{S}$; see \cref{exp:Nilpotencydegree,exampleregular}. One can thus view the non-vanishing of the truncated Atiyah class as a homological obstruction to $\vp$ admitting an algebra retract.


\section{Hochschild cohomology}
\label{se:HH}
The Atiyah class of a map is closely connected to Hochschild cohomology of the algebras involved. In this section we record the necessary facts on Hochschild cohomology and their characteristic action on derived categories. Some constructions require dg (=differential graded) algebras and the corresponding derived category of dg modules; everything we need is explained in \cite{Avramov:1998,Avramov/Buchweitz/Iyengar/Miller:2010}. 

\begin{chunk}
We only consider graded-commutative dg algebras, so let $A$ be such a dg algebra. The derived category of dg $A$-modules is denoted $\dcat A$, and its suspension functor is denoted $\shift$. One has the usual bi-functors $\RHom_A(-,-)$ and $-\lotimes_A-$ on $\dcat{A}$; for dg $A$-modules $M$ and $N$, we set
\[
\Ext^*_A(M,N)\colonequals\hh^*(\RHom_A(M,N))\text{ and } \Tor^A_*(M,N)\colonequals\hh_*(M\lotimes_A N)\,.
\]	
An element $\alpha$ in $\Ext_A^n(M,N)$ corresponds to a morphism $M\to \shift^{n} N$ in $\dcat{A}$; by a slight abuse of notation we write also $\alpha$ for this morphism. Given $\alpha\colon M\to \shift^n M$ we write $\alpha^i$ for the composition 
\[
M\xra{\ \alpha\ }\shift^n M\xra{\shift^n\alpha}\ldots \xra{\shift^{n(i-1)}\alpha}\shift^{ni} M
\] 
in $\dcat{A}$, and say $\alpha$ is \emph{nilpotent} if $\alpha^i=0$ in $\dcat A$ for some $i\ges 0$. 
\end{chunk}


\begin{chunk}
\label{ch:derived-enveloping}
Let $\vp\colon R\to S$ be a map of commutative rings. Set $\env[R]S\colonequals S\lotimes_R S$, the (derived) enveloping algebra of $S$ over $R$, and let $\mu_R^S\colon\env[R]S\to S$ be the multiplication map. We can model $\env[R]{S}$ as $S\otimes_R A$ where $\vp$ factors as maps of dg algebras
\[
R\lra A\xra{\ \epsilon\ } S
\] 
with $A$ a dg $R$-algebra such that each $R$-module $A_i$ is flat, and $\epsilon$ is a quasi-isomorphism. The multiplication map $\mu_R^S$ is modeled by $1\otimes\epsilon\colon S\otimes_R A\to S$. This is independent of the choice of factorization of $\vp$ in the following sense: If $R\to A'\to S$ is another such factorization of $\vp$, then $A'\otimes_R S\simeq A\otimes_R S$ as dg algebras augmented to $S$. In particular, $S$ is fixed under the equivalence $\dcat{S\otimes_RA}\equiv\dcat{S\otimes_RA'}$ induced by the aforementioned quasi-isomorphism of dg algebras.
\end{chunk}

We view $S$ as a dg $\env[R]{S}$-module through the multiplication map $\mu^S_R$.

\begin{chunk}
\label{ConstructionHH}
The Hochschild cohomology of $S$ over $R$ is the graded-commutative $S$-algebra
\begin{equation*}
\hc{S}{R} \colonequals \Ext_{\env[R]{S}}^{*}(S,S)\,.
\end{equation*} 
For each $M$ in $\dcat{S}$ there is a morphism of graded $S$-algebras
\[
\chi_M \colon \hc{S}{R} \lra \Ext_S^*(M,M)\,
\]
induced by the functor $- \lotimes_S M \colon \dcat{\env{S}} \to \dcat{S}$. We call $\chi_M$ the \emph{characteristic map of $M$}. It defines a central action of $\hc{S}{R}$ on $\dcat{S}$: for $N$ in $\dcat S$ and maps $\alpha$ in $\hc{S}{R}$ and $\beta$ in $\Ext_S(M,N)$ one has 
\[
\chi_N(\alpha) \beta = (-1)^{|\alpha||\beta|} \beta \chi_M(\alpha) \,.
\] 
\end{chunk}

\begin{chunk}
A subset $U$ of $\hc{S}{R}$ acts \emph{trivially}, respectively, \emph{nilpotently}, on $M$ if for each $\alpha\in U$, the element $\chi_M(\alpha)$ in $\Ext_S(M,M)$ is zero, respectively, nilpotent.  For example, $\hc[\geqslant 1]{S}{R}$ acts trivially on the $S$-complex $S\lotimes_R L$ for any $R$-complex $L$. The subcategory of $\dcat S$ consisting of $S$-complexes on which $U$ acts nilpotently is thick; that is to say, it is a triangulated subcategory of $\dcat S$ closed under direct summands. These observation will be used often in what follows.
\end{chunk}

\section{The truncated Atiyah class}
\label{se:tac}
In this section we recall the construction of the truncated Atiyah class, introduced by Huybrechts and Thomas in \cite{Huybrechts/Thomas:2010}, and discuss its connection to Hochschild cohomology. While the truncated Atiyah class, like the Atiyah class itself, can be defined for any map of rings, we only need it for surjective maps.

Throughout $\vp\colon R\to S$ is a surjective map of commutative noetherian rings, with kernel $I$. The discussion below is extracted from \cite{Briggs/Iyengar/Letz/Pollitz:2021}.

\begin{chunk}
\label{truncatedAtiyah}
The multiplication map $\mu^S_R$ from \cref{ch:derived-enveloping} extends in an exact triangle 
\[
 \env[R]S \xra{\ \mu\ } S \to \shift J\to 
\]
in $\dcat{\env[R]{S}}$. From the triangle above, it is clear that 
\[
\hh_i(J)=
\begin{cases}
\Tor_{\ges 1}^R(S,S) & i\geqslant 1;\\
0 & i\leqslant 0\,.
\end{cases}
\]
Thus the canonical truncation yields a morphism $J\to \shift I/I^2$ in $\dcat{\env[R]S}$. The \emph{truncated Atiyah class} of $\vp$, denoted $\at{\vp}$, is the composition of morphisms
\[
S\to \shift J\to \shift^2I/I^2
\] 
in $\dcat{\env[R]{S}}$. Consider the map sending $\alpha$ in $\Hom_S(I/I^2,S)$ to the composition
\begin{equation*}
\label{truncatedatiyahdiagram}
\begin{tikzcd}
S \arrow[r,"\at{\vp}"] & \shift^2 I/I^2\arrow[r,"\shift^2\alpha"] & \shift^2 S
\end{tikzcd}
\end{equation*}
in $\dcat{\env[R]{S}}$. By \cite[Lemma~2.8]{Briggs/Iyengar/Letz/Pollitz:2021}, this is an isomorphism 
\[
\delta^\vp\colon \Hom_S(I/I^2,S)\xra{\cong} \hc[2]{S}{R}\,.
\]
For each $M$ in $\dcat{S}$ and $\alpha$ in $\Hom_S(I/I^2,S)$, applying $-\lotimes_S M$ yields the following commutative diagram in $\dcat{S}$
\begin{equation}
\label{hh2atiyahdiagram}
\begin{tikzcd}
M \arrow[r,"\at{\vp}\lotimes_S M"]\arrow[rd,swap,"\chi_M(\delta^\vp(\alpha))"]&[2em] \shift^2 I/I^2\lotimes_S M\arrow[d,"\shift^2\alpha\otimes 1"] \\
& \shift^2 M\,.
\end{tikzcd}
\end{equation}
The horizontal map in this diagram is the \emph{truncated Atiyah class of $\vp$ at M}, denoted $\atiyah \vp M$. Thus we can interpret $\atiyah \vp M$ as a class in $\Ext_S^2(M, I/I^2\lotimes_SM)$. 
\end{chunk}

\begin{chunk}
\label{fullAtiyah}
As discussed in \cite{Huybrechts/Thomas:2010}, the truncated Atiyah class $\atiyah{\vp}{M}$ can be recovered from the universal Atiyah class 
\[
{\sf At}^\vp\colon S\to \shift {\rm L}(\vp)
\] 
where ${\rm L}(\vp)$ is the cotangent complex of $\vp$; cf.\@ \cite{Illusie:1971,Quillen:1969}.
Namely, $\atiyah{\vp}{M}$ factors as 
\[
M\xra{{\sf At}^\vp\lotimes_S M}\shift{\rm L}(\vp)\lotimes_S M\to \shift^2I/I^2\lotimes_S M
\] 
where the second map is induced by the soft truncation using that $\hh_0({\rm L}(\vp))=0$ and $\hh_1({\rm L}(\vp))= I/I^2$. 
The cotangent complex and universal Atiyah class of $\vp$ are defined via simplicial resolutions and since this machinery is not needed in this article we direct the interested reader to \emph{loc.\@ cit.\@} for further details. 
\end{chunk}

\begin{chunk}
\label{atiyahforcis}
Suppose now that the surjective map $\vp\colon R\to S$ is complete intersection of codimension $c$; that is to say, that the ideal $I$ is generated by an $R$-regular sequence of length $c$. Then the $S$-module $I/I^2$ is a free $S$-module of rank $c$, and the map $\delta^\vp$ induces the isomorphism of $S$-algebras
\[
\mathrm{Sym}(\delta^\vp)\colon \mathrm{Sym}_S(\Hom_S(I/I^2,S)) \xra{\ \cong \ } \hc{S}{R}
\]
where $\Hom_S(I/I^2,S)$ sits in cohomological degree 2. See \cite[Section~3]{Avramov/Buchweitz:2000a} for a proof. Choosing a basis for the $S$-module $I/I^2$ gives rise to a description of $\hc{S}{R}$ as a polynomial ring over $S$, in $c$ variables of cohomological degree $2$. These are the operators of Gulliksen \cite{Gulliksen:1974} and Eisenbud \cite{Eisenbud:1980}; cf.\@ \cite{Avramov/Sun:1998}. 
\end{chunk}

\begin{lemma}
\label{atiyahhh2ci}
Let $\vp\colon R\to S$ be a surjective complete intersection map, and $M$ an $S$-complex. One has $\atiyah \vp M =0$ if and only if $\hc[2]{S}{R}$ acts trivially on $M$. 
\end{lemma}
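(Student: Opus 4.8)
The plan is to deduce everything from the commutative triangle \eqref{hh2atiyahdiagram}, combined with the fact that, because $\vp$ is complete intersection of codimension $c$, the $S$-module $I/I^2$ is free of rank $c$; see \cref{atiyahforcis}.

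For the forward direction, suppose $\atiyah{\vp}{M}=0$. For every $\alpha\in\Hom_S(I/I^2,S)$ the diagram \eqref{hh2atiyahdiagram} displays $\chi_M(\delta^\vp(\alpha))$ as the composite $(\shift^2\alpha\otimes 1)\circ\atiyah{\vp}{M}$, which is therefore zero. Since $\delta^\vp\colon\Hom_S(I/I^2,S)\to\hc[2]{S}{R}$ is an isomorphism, every element of $\hc[2]{S}{R}$ has the form $\delta^\vp(\alpha)$, so $\chi_M$ vanishes on all of $\hc[2]{S}{R}$; that is, $\hc[2]{S}{R}$ acts trivially on $M$. Note that this implication uses neither the complete intersection hypothesis nor the freeness of $I/I^2$.

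For the converse, fix a basis $e_1,\dots,e_c$ of the free $S$-module $I/I^2$ and let $\alpha_1,\dots,\alpha_c\in\Hom_S(I/I^2,S)$ be the dual basis. Applying $-\lotimes_S M$ and $\shift^2$ identifies $\shift^2 I/I^2\lotimes_S M$ with the finite coproduct $(\shift^2 M)^{\oplus c}$ in $\dcat S$, in such a way that the structure map $\shift^2\alpha_j\otimes 1$ is the projection onto the $j$-th summand. In a triangulated category a morphism into a finite coproduct is zero if and only if each of its components is zero, and by \eqref{hh2atiyahdiagram} the $j$-th component of $\atiyah{\vp}{M}$ equals $\chi_M(\delta^\vp(\alpha_j))$. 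If $\hc[2]{S}{R}$ acts trivially on $M$ these are all zero, hence $\atiyah{\vp}{M}=0$.

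The only step needing a little care is the identification of $\shift^2 I/I^2\lotimes_S M$ with a finite coproduct of copies of $\shift^2 M$ compatibly with the maps $\shift^2\alpha_j\otimes 1$, together with the attendant observation that vanishing of a map into such a coproduct is detected componentwise; granting this, both implications follow formally from \eqref{hh2atiyahdiagram} and the bijectivity of $\delta^\vp$.
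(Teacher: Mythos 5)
Your argument is correct and is essentially the paper's proof: the paper's (terse) proof also rests on the commutative diagram \cref{hh2atiyahdiagram}, the bijectivity of $\delta^\vp$, and the freeness of $I/I^2$ to identify $\shift^2 I/I^2\lotimes_S M$ with a finite direct sum of copies of $\shift^2 M$ so that vanishing is checked componentwise. You have merely spelled out the componentwise bookkeeping (dual basis, projections) that the paper leaves implicit.
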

\begin{proof}
Set $I\colonequals \Ker \vp$ and consider the composition in $\dcat{S}$
\[
M\xra{\atiyah{\vp}{M}} \shift^2I/I^2\lotimes_S M\simeq \shift^2I/I^2\otimes_S M\,.
\]
The desired result follows from this, \cref{hh2atiyahdiagram}, and the freeness of $I/I^2$ over $S$.
\end{proof}

\begin{chunk}\label{c:triangle}
Assume $\vp$ is complete intersection of codimension one. By \cref{atiyahforcis}, $\hc[2]{S}{R}$ is isomorphic to $S$, and so choosing a basis element $\eta$ for the free $S$-module $\hc[2]{S}{R}$ identifies $\atiyah{\vp}{M}$ with $\chi_M(\eta)$ in $\Ext_S^2(M,M)$. Finally, from the triangle defining the Atiyah class one obtains the exact triangle in $\dcat S$ 
\[
M\xra{\chi_M(\eta)} \shift^2 M\to \shift S\lotimes_R \vp_*(M)\to 
\] 
 where $\vp_*\colon \dcat{S}\to \dcat{R}$ is the restriction functor; see, for example, \cite{Avramov:1989}.
\end{chunk}

The arguments in the next section use some basic results on Andr\'{e}--Quillen cohomology and homotopy Lie algebras of local rings. We recall the relevant points below; for further details see \cite{Iyengar:2007,Majadas/Rodicio:2010} and \cite[Section~10]{Avramov:1998}.

\begin{chunk}
\label{AQfacts}
Let $\vp\colon R\to S$ be a surjective map of local rings, and let $k$ be their common residue field. We write $\AQC iSRk$ for the $i$th Andr\'{e}--Quillen cohomology module of $\vp$ with coefficients in $k$. We need elementary properties whose proofs are contained in the references above. 

First, one has the following natural identification
\[
\AQC 1SRk \cong \Hom_k(I/\fm_R I,k)
\]
where $\fm_R$ is the maximal ideal of $R$ and $I$ is the kernel of $\vp$. Second, there is a canonically induced exact sequence of $k$-spaces 
\[
0\lra \AQC 1kSk\lra \AQC 1kRk\lra \AQC 1SRk\xra{\eth} \AQC 2kSk\lra \ldots 
\]
called the Jacobi--Zariski sequence corresponding to the composition $R\xra{\vp} S\to k$. 
\end{chunk}

\begin{chunk}
\label{piexactsequence} 
Fix a local ring $R$ with residue field $k$. Let $\pi(R)$ denote the homotopy Lie algebra of $R$. It is a graded Lie algebra over $k$ whose universal enveloping algebra is $\Ext_R(k,k)$. By the Poincar\'{e}--Birkhoff--Witt theorem, a $k$-basis for $\Ext_R(k,k)$ consists of all compositions of the form 
 \[
 \zeta_1^{i_1}\zeta_2^{i_2}\ldots \zeta_n^{i_n}
 \]
 where $\zeta_j$ is in $\pi(R)$ with $i_j\geqslant 0$ when $|\zeta_j|$ is even and $i_j=0,1$ when $|\zeta_j|$ is odd; see, for example, \cite[Theorem~10.2.1]{Avramov:1998}. In particular, a nonzero element in $\pi(R)$ of even degree is not nilpotent when regarded as an element of $\Ext_R(k,k)$.

Let $\vp\colon R\to S$ be a surjective map of local rings. There is a naturally defined map of graded Lie algebras 
\[
\pi(\vp)\colon \pi(S)\to \pi(R)\,.
\]
An important point is that for $i=1,2$, the map $\vp$ determines a commutative diagram 
 \begin{equation}\label{comparisonmaps}
 \begin{tikzcd}
 	\AQC ikSk \ar[r,"\cong"] \ar[d,swap,"\AQC ik\vp k"]&\pi^i(S)\ar[d,"\pi^i(\vp)"]\ar[r,hookrightarrow]& \Ext_S^i(k,k)\, \ \ar[d,"\ext{k}"] \\
 	\AQC ikRk \ar[r,"\cong"] &\pi^i(R) \ar[r,hookrightarrow]& \Ext_R^i(k,k)\,.
 \end{tikzcd}
 \end{equation} 
Consider the diagram of $S$-modules
\begin{equation} \label{maindiagram}
\begin{tikzcd}
	\hc[2]{S}{R}\ar[dd,swap,"\chi_k"] &\Hom_S(I/I^2,S) \ar[l,swap,"\delta^\vp"] \ar[l,"\cong"] \ar[r] & \Hom_S(I/I^2,k)\ar[d,"\cong"]\ar[ddll,"\psi"]\\ 
 & & \AQC 1 SRk \ar[d,"\eth"]	\\
	\Ext_S^2(k,k)&\ar[l,"\iota"] \pi^2(S) & \ar[l,"\cong"]\AQC 2 kSk \, 
\end{tikzcd}
\end{equation}	
where $\iota$ is the canonical inclusion and the unlabelled horizontal arrow is induced by $S\to k$; the map $\psi$ is obtained by applying $\Hom_{\dcat{S}}(-,\shift^2k)$ to the composition 
\[
k\xra{\atiyah{\vp}{k}}\shift^2I/I^2\lotimes_S k\to \shift^2 I/I^2\otimes_S k
\]
 where the second map is the augmentation.
 \end{chunk}

\begin{lemma}\label{mainm}
Diagram \cref{maindiagram} is commutative, and  $\Image(\psi)\supseteq\chi_k(\hc[2]{S}{R})$ with equality when $\vp$ is complete intersection. 
\end{lemma}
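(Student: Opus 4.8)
The plan is to establish commutativity by breaking diagram \cref{maindiagram} into its constituent squares and triangles, and then handle the image assertion separately. The leftmost square --- involving $\delta^\vp$, $\chi_k$, $\iota$, and the isomorphisms $\AQC 1SRk \cong \Hom_S(I/I^2,k)$, $\AQC 2kSk\cong\pi^2(S)$, together with the connecting map $\eth$ --- is really a statement relating the characteristic map of $k$ to the Jacobi--Zariski connecting homomorphism. I would first recall from \cref{truncatedAtiyah} that $\chi_k(\delta^\vp(\alpha))$ is, by the commutative triangle \cref{hh2atiyahdiagram}, the composite $\shift^2\alpha\otimes 1$ after $\atiyah{\vp}{k}$; then factor $\atiyah{\vp}{k}$ through the augmentation $\shift^2 I/I^2\lotimes_S k\to\shift^2 I/I^2\otimes_S k$ as in \cref{fullAtiyah}. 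This shows $\chi_k(\delta^\vp(\alpha))$ only depends on $\alpha$ through its reduction in $\Hom_S(I/I^2,k)$, which is exactly the content needed for the triangle involving $\psi$ to commute; it identifies $\psi$ with $\iota\circ\eth$ after the stated isomorphisms. The identification of the truncated Atiyah class (reduced mod $\fm$) with the connecting map $\eth\colon \AQC 1SRk\to\AQC 2kSk$ in the Jacobi--Zariski sequence is the technical heart here; this is where I would lean on \cite{Briggs/Iyengar/Letz/Pollitz:2021} and the standard compatibility of the Atiyah class with the cotangent complex (\cref{fullAtiyah}), since $\eth$ is itself defined via the cotangent complex triangle for $R\to S\to k$.

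For the image statement, observe that $\psi$ is by definition $\Hom_{\dcat S}(-,\shift^2 k)$ applied to $k\xra{\atiyah{\vp}{k}}\shift^2 I/I^2\otimes_S k$, so its image consists of all composites $\shift^2\bar\alpha\circ(\text{reduced }\atiyah{\vp}{k})$ as $\bar\alpha$ ranges over $\Hom_S(I/I^2,k)$. On the other hand, $\chi_k(\hc[2]{S}{R})$ is spanned by the $\chi_k(\delta^\vp(\alpha))$ for $\alpha\in\Hom_S(I/I^2,S)$, and by commutativity of the left square each of these lies in $\Image(\psi)$ --- this gives the containment $\Image(\psi)\supseteq\chi_k(\hc[2]{S}{R})$. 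Wait: I should double check the direction. The horizontal map $\Hom_S(I/I^2,S)\to\Hom_S(I/I^2,k)$ in \cref{maindiagram} need not be surjective in general, so $\chi_k(\delta^\vp(-))$ need only hit a subspace of $\Image(\psi)$; that is precisely the asserted inclusion. When $\vp$ is complete intersection, $I/I^2$ is free over $S$ by \cref{atiyahforcis}, hence $\Hom_S(I/I^2,S)\to\Hom_S(I/I^2,k)$ is the surjection $\Hom_S(I/I^2,S)\surj\Hom_S(I/I^2,S)\otimes_S k$, so every $\bar\alpha$ lifts to some $\alpha$ and the two images coincide.

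The main obstacle I anticipate is the careful bookkeeping in the leftmost square: one must match up three a priori different descriptions of the same map --- the characteristic action $\chi_k$ restricted to $\hc[2]{S}{R}$, the Atiyah-class description via \cref{hh2atiyahdiagram}, and the Jacobi--Zariski connecting map $\eth$ --- and verify the signs and the naturality of the truncation morphisms $J\to\shift I/I^2$ and $\operatorname{L}(\vp)\to\shift I/I^2$ are compatible. Once the reduced truncated Atiyah class of $\vp$ at $k$ is identified with $\eth$ (up to the canonical isomorphisms), everything else is formal diagram-chasing together with the freeness of $I/I^2$ in the complete intersection case. I would organize the write-up so that this identification is isolated as the one substantive computation, with the remaining commutativities and the image comparison following quickly from it.
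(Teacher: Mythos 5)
Your proposal is correct and takes essentially the same route as the paper: the one substantive input---identifying $\psi$ (the reduced truncated Atiyah class) with the Jacobi--Zariski connecting map, equivalently with $\Ker\pi^2(\vp)$ inside $\Ext_S^2(k,k)$---is delegated to \cite[3.11]{Briggs/Iyengar/Letz/Pollitz:2021} exactly as in the paper's proof, with the remaining commutativity checked from \cref{hh2atiyahdiagram} and the truncation. Your derivation of the containment $\chi_k(\hc[2]{S}{R})\subseteq\Image(\psi)$ from the diagram, and of equality in the complete intersection case from the freeness of $I/I^2$ (which makes $\Hom_S(I/I^2,S)\to\Hom_S(I/I^2,k)$ surjective), matches the paper's argument.
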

\begin{proof}
By \cite[3.11]{Briggs/Iyengar/Letz/Pollitz:2021}, the image of $\psi$ is $\Ker\pi^2(\vp)$. Now the commutativity of \cref{maindiagram}, and hence the desired containment,  follows from the observations in \cref{AQfacts} and \cref{comparisonmaps}.  When $\vp$ is complete intersection, the containment is an equality since $I/I^2$ is a free $S$-module. 
\end{proof}

\section{Exceptional complete intersections}
\label{se:ec}
In this section we prove \cref{introthm1} from the introduction, and present examples that witness some, perhaps, surprising behavior.

\begin{chunk}
\label{eci}
Let $\vp\colon R\to S$ be a surjective map of noetherian local rings; set $I\colonequals \Ker \vp$. Following \cite{Brochard/Iyengar/Khare:2021} we say $\vp$ is an \emph{exceptional complete intersection} provided $I$ is generated by a regular sequence that can be extended to a minimal generating set for $\fm_R$, the maximal ideal of $R$. The latter condition means that the natural map
\[
\frac{I}{\fm_R I}\lra \frac{\fm_R}{\fm_R^2}
\]
is one-to-one. We refer to \cite{Brochard/Iyengar/Khare:2021}, where this notion is introduced for not necessarily surjective maps of local rings, for various characterizations of exceptional complete intersection maps in terms of numerical invariants associated to $R$ and $S$. Here is one more, extracted from the proof of \cite[Theorem~3.1]{Brochard/Iyengar/Khare:2021}.

\begin{lemma}
\label{eci-fpd}
Let $\vp\colon R\to S$ be a surjective map of noetherian local rings. The map $\vp$ is exceptional complete intersection if and only if $\pdim_RS$ is finite and the natural map $I/\fm_R I \to \fm_R/\fm_R^2$ is one-to-one. 
\end{lemma}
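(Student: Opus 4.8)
The plan is to prove both implications by reducing to the known characterization of exceptional complete intersection maps in \cite{Brochard/Iyengar/Khare:2021}, using the fact that a surjective map $\vp\colon R\to S$ is complete intersection precisely when $\pdim_R S$ is finite together with a condition controlling the generators of $I$. The forward direction is essentially immediate: if $\vp$ is exceptional complete intersection, then by definition $I$ is generated by an $R$-regular sequence, so the Koszul complex on that sequence is a finite free resolution of $S$ over $R$, whence $\pdim_R S=\operatorname{codim}\vp<\infty$; and the injectivity of $I/\fm_R I\to \fm_R/\fm_R^2$ is built into the definition recorded in \cref{eci}.

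For the converse, I would argue that the two stated conditions force $I$ to be generated by a regular sequence. The key input is that $\pdim_R S<\infty$ together with injectivity of $I/\fm_R I \to \fm_R/\fm_R^2$ implies $I$ is generated by a regular sequence. One route: choose a minimal generating set $x_1,\dots,x_c$ of $I$ whose images in $\fm_R/\fm_R^2$ are linearly independent; extend these to a minimal generating set $x_1,\dots,x_c,y_1,\dots,y_d$ of $\fm_R$. Then the $x_i$ form part of a system of parameters after a standard prime-avoidance argument, but more to the point, one shows the $x_i$ form a regular sequence. This is exactly the content extracted from the proof of \cite[Theorem~3.1]{Brochard/Iyengar/Khare:2021}: finiteness of $\pdim_R S$ lets one compare the Poincar\'e series / Koszul homology, and the condition that the $x_i$ are part of a minimal generating set of $\fm_R$ forces the Koszul complex $\Kos(x_1,\dots,x_c;R)$ to be acyclic in positive degrees, i.e.\ the $x_i$ are $R$-regular. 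Equivalently, one invokes that a surjection with finite projective dimension whose kernel is generated by analytically independent elements (independent mod $\fm_R^2$) must be complete intersection.

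Concretely, the cleanest way I would organize it: (i) reduce the ``if'' direction to showing the minimal generators $x_1,\dots,x_c$ of $I$ (chosen independent mod $\fm_R^2$) form a regular sequence; (ii) note that $\pdim_R S<\infty$ gives, via the Auslander--Buchsbaum formula, $\depth S = \depth R - c'$ where $c'=\pdim_R S$, and via minimality of the resolution $c' \ge c$; (iii) use that the $x_i$ are linearly independent in $\fm_R/\fm_R^2$ to bound $c$ from below by an $\edim$ comparison, forcing $\edim R - \edim S = c$; (iv) conclude by a standard characterization that equality $\edim R-\edim S = c = \pdim_R S$ together with the independence condition forces $I=(x_1,\dots,x_c)$ with the $x_i$ an $R$-regular sequence — this is precisely the numerical criterion of \cite{Brochard/Iyengar/Khare:2021}, so at this point I would simply cite \emph{loc.\ cit.} rather than reprove it.

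The main obstacle is step (iii)/(iv): making the passage from ``finite projective dimension plus $I$ analytically independent in $\fm_R$'' to ``$I$ generated by a regular sequence'' without circularity. The honest approach is to verify that the hypotheses here match exactly the hypotheses (or an intermediate step) in the proof of \cite[Theorem~3.1]{Brochard/Iyengar/Khare:2021}, and the proof is then a quotation; the only real work is bookkeeping to confirm that ``$I/\fm_R I\to \fm_R/\fm_R^2$ injective'' is the same as the ``minimal generators of $I$ extend to a minimal generating set of $\fm_R$'' condition used there, which is a triviality about finite-dimensional vector spaces over $k$. I do not expect any genuinely new difficulty beyond locating the precise statement in the cited reference.
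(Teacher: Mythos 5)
Your forward direction is fine. The converse, however, has a genuine gap, and it sits exactly where the real content of the lemma lies. The pivotal numerical claim in your step (ii), that $\pdim_RS\ges \mu(I)$ ``via minimality of the resolution,'' is not a valid inference and is false in general: for $R=k\llbracket x,y,z\rrbracket$ and $I=\fm_R^2$ one has $\mu(I)=6$ while $\pdim_R R/I=3$. Under the hypotheses of the lemma the equality $\pdim_RS=\mu(I)$ does hold, but only as a consequence of the conclusion, so using it is circular. Without it your chain $\edim R-\edim S=c=\pdim_RS$ never closes, and step (iv) has nothing to feed on. The same circularity infects the earlier sentence invoking that ``a surjection with finite projective dimension whose kernel is generated by elements independent mod $\fm_R^2$ must be complete intersection''---that is verbatim the statement being proved---and the remark that finiteness of $\pdim_RS$ ``forces the Koszul complex $\Kos(x_1,\dots,x_c;R)$ to be acyclic'' is an assertion, not an argument. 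Finally, the fallback of quoting \cite[Theorem~3.1]{Brochard/Iyengar/Khare:2021} does not discharge the obligation: the lemma is not a stated result of that paper but something the present authors extract from its \emph{proof}, which is precisely why they supply an argument here.

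The missing idea is a codimension-one reduction and induction on $\mu(I)$, which is how the paper proceeds. Assuming $I\neq 0$, finiteness of $\pdim_RS$ guarantees that $I$ contains a nonzerodivisor (Auslander--Buchsbaum, \cite[Corollary 1.4.7]{Bruns/Herzog:1998}); since the injectivity hypothesis gives $I\not\subseteq\fm_R^2$, a prime-avoidance refinement produces a nonzerodivisor $x\in I\smallsetminus\fm_R^2$. Then $R\to R'\colonequals R/xR$ is exceptional complete intersection of codimension one, $\pdim_{R'}S$ is finite by Nagata's theorem \cref{ch:Nagata}, and the hypothesis on the kernel is inherited by $R'\to S$, so induction on the number of generators of $I$ concludes. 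Any correct proof needs some such mechanism actually producing a regular sequence (an alternative would be to verify that $I/I^2$ is $S$-free and invoke Ferrand--Vasconcelos, but that freeness is likewise not free of charge); the numerical bookkeeping you propose does not supply it.
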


\begin{proof}
Since a complete intersection map has finite projective dimension, the only if direction is clear. As to the converse: We can assume $I\ne 0$. Then the hypothesis that $\pdim_RS$ is finite means that $I$ contains a nonzero divisor, say $x$; this is by a result of Auslander and Buchsbaum~\cite[Corollary 1.4.7]{Bruns/Herzog:1998}. Since $I\not\subseteq \fm^2_R$, one can ensure that $x$ is not in $\fm^2_R$; this is by a variation of the prime avoidance argument~\cite[Lemma~1.2.2]{Bruns/Herzog:1998}. Setting $R'\colonequals R/xR$, the map $\vp$ factors as
\[
R\lra R'\lra S\,.
\] 
By the choice of $x$, the map $R\to R'$ is exceptional complete intersection. Thus since $\pdim_RS$ is finite so is $\pdim_{R'}S$; this is by Nagata's theorem recalled in \ref{ch:Nagata}. Moreover one can verify easily that the hypothesis on $I$ is inherited by the kernel of the map $R'\to S$. We can thus induce on the number of generators of $I$ to deduce that $\vp$ is exceptional complete intersection. 
\end{proof}
\end{chunk}

The result below, due to Nagata~\cite[Corollary~27.5]{Nagata:1962}, was used above; this will be generalized later in \cref{thm:nagata+}.

\begin{chunk}
\label{ch:Nagata}
When $\vp\colon R\to S$ is exceptional complete intersection and $M$ is a finitely generated $S$-module, $\pdim_R M$ is finite if and only if $\pdim_S M$ is finite.
\end{chunk}

The following result contains \cref{introthm1} from the introduction. Most of the groundwork for the proof has been laid in \cref{se:tac}.

\begin{theorem}
\label{characterization1}
Let $\vp\colon R\to S$ be a surjective map of noetherian local rings with $\pdim_R S$ finite. The conditions below are equivalent: 
	\begin{enumerate}
	\item 
	\label{characterization1:eci}$\vp$ is exceptional complete intersection;
	\item \label{characterization1:trivial}
	$\vp$ is complete intersection and $\hc[2]{S}{R}$ acts trivially on $k$;
	\item \label{characterization1:nilpotence} $\vp$ is complete intersection and $\hc[2]{S}{R}$ acts nilpotently on $k$;
		\item \label{characterization1:vanish} $\atiyah\vp k=0$, where $k$ is the residue field of $S$.
	\end{enumerate}
\end{theorem}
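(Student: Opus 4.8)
The plan is to prove a cycle of implications, routing everything through the Hochschild-cohomology action and the comparison diagram \cref{maindiagram}. The implications $\eqref{characterization1:trivial}\Rightarrow\eqref{characterization1:nilpotence}$ and the equivalence $\eqref{characterization1:trivial}\Leftrightarrow\eqref{characterization1:vanish}$ (the latter valid already when $\vp$ is complete intersection) are immediate: the first is trivial, and the second is exactly \cref{atiyahhh2ci}. So it suffices to close the loop by proving $\eqref{characterization1:eci}\Rightarrow\eqref{characterization1:trivial}$ and $\eqref{characterization1:nilpotence}\Rightarrow\eqref{characterization1:eci}$; note $\eqref{characterization1:vanish}\Rightarrow\eqref{characterization1:nilpotence}$ will need that $\atiyah\vp k=0$ forces $\vp$ to be complete intersection in the first place, which I address below.

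For $\eqref{characterization1:eci}\Rightarrow\eqref{characterization1:trivial}$: assume $\vp$ is exceptional complete intersection, so in particular complete intersection, and we must show $\hc[2]{S}{R}$ acts trivially on $k$, i.e. $\chi_k(\hc[2]{S}{R})=0$. By \cref{mainm}, in the complete intersection case $\chi_k(\hc[2]{S}{R})=\Image(\psi)=\Ker\pi^2(\vp)$, and chasing \cref{maindiagram} this equals the image of $\eth\colon \AQC 1SRk\to\AQC 2kSk$. Via the identification $\AQC 1SRk\cong\Hom_k(I/\fm_R I,k)$ from \cref{AQfacts} and the Jacobi--Zariski sequence for $R\xra{\vp}S\to k$, the map $\eth$ fits into
\[
\AQC 1kRk\lra \AQC 1SRk\xra{\ \eth\ }\AQC 2kSk,
\]
so $\Image(\eth)=0$ precisely when $\AQC 1kRk\to\AQC 1SRk$ is surjective. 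But this latter map is, under the identifications, dual to the natural map $I/\fm_R I\to \fm_R/\fm_R^2$ (Andr\'e--Quillen cohomology in degree $1$ computes the conormal/cotangent data), and exceptional complete intersection is by definition the injectivity of that map; dualizing over $k$ turns injectivity into the required surjectivity. Hence $\chi_k(\hc[2]{S}{R})=0$.

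For $\eqref{characterization1:nilpotence}\Rightarrow\eqref{characterization1:eci}$: assume $\vp$ is complete intersection and $\hc[2]{S}{R}$ acts nilpotently on $k$. By \cref{atiyahforcis} the $S$-module $I/I^2$ is free of rank $c$, and $\chi_k$ sends a basis of $\Hom_S(I/I^2,S)$ (in cohomological degree $2$) to a family $\eta_1,\dots,\eta_c\in\Ext_S^2(k,k)$. Running the same diagram chase as above, each $\chi_k(\delta^\vp(\alpha))$ lands in $\pi^2(S)\subseteq\Ext_S^2(k,k)$ — it is the image under the canonical inclusion $\iota$ of an element of $\pi^2(S)$, indeed of $\Image(\eth)$. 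A nonzero element of $\pi^2(S)$, being of even degree, is non-nilpotent in $\Ext_S(k,k)$ by the Poincar\'e--Birkhoff--Witt description in \cref{piexactsequence}. So nilpotence of the action forces $\chi_k(\delta^\vp(\alpha))=0$ for all $\alpha$, i.e. $\eth=0$, i.e. $\AQC 1kRk\to\AQC 1SRk$ surjective, i.e. $I/\fm_R I\to\fm_R/\fm_R^2$ injective after dualizing. Combined with the standing hypothesis $\pdim_R S<\infty$, \cref{eci-fpd} gives that $\vp$ is exceptional complete intersection. The remaining gap, $\eqref{characterization1:vanish}\Rightarrow$ ``$\vp$ is complete intersection'': when $\atiyah\vp k=0$ the map $\psi$ is zero, so $\eth=0$, so $\AQC 1SRk$ injects into $\AQC 1kRk$; together with finiteness of $\pdim_R S$ one shows (this is the content sitting behind \cite[Proposition~2.8]{Avramov:1989}) that $I$ is generated by a regular sequence.

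The main obstacle is the bookkeeping in \cref{maindiagram}: identifying $\Image(\psi)$ with $\Ker\pi^2(\vp)$ and with $\Image(\eth)$, and then matching $\eth=0$ with injectivity of $I/\fm_R I\to\fm_R/\fm_R^2$ through the duality between $\AQC 1SRk$ and $I/\fm_R I$ in the Jacobi--Zariski sequence. Once those identifications are in hand — and they are essentially packaged in \cref{mainm} and \cref{AQfacts} — the even-degree non-nilpotency fact from \cref{piexactsequence} does the real work of collapsing ``nilpotent'' to ``trivial,'' and \cref{eci-fpd} converts the cohomological vanishing back into the ring-theoretic definition.
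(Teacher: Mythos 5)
Your proposal is correct and rests on the same ingredients as the paper's own proof: the commutative diagram \cref{maindiagram} together with \cref{mainm}, the Jacobi--Zariski identifications from \cref{AQfacts}, the PBW fact that nonzero even-degree elements of $\pi^2(S)$ are non-nilpotent in $\Ext_S(k,k)$ from \cref{piexactsequence}, \cref{atiyahhh2ci}, and \cref{eci-fpd}. The decomposition differs only mildly: the paper proves \cref{characterization1:eci}$\Rightarrow$\cref{characterization1:trivial} by citing Avramov's Proposition~2.8, whereas you obtain it internally by running the $\eth=0$ argument in reverse (exceptional complete intersection gives surjectivity of $\AQC 1kRk\to\AQC 1SRk$, hence $\eth=0$, hence $\chi_k(\hc[2]{S}{R})=\Image(\psi)=0$ by \cref{mainm}), which is a reasonable self-contained alternative; and the paper goes directly \cref{characterization1:vanish}$\Rightarrow$\cref{characterization1:eci} via $\psi=0\Rightarrow\eth=0$ and \cref{eci-fpd}, while you first extract the complete intersection property from \cref{characterization1:vanish} and then pass through \cref{atiyahhh2ci}---the underlying computation is the same. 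Two small imprecisions, neither fatal: from $\eth=0$ the Jacobi--Zariski sequence yields that $\AQC 1kRk\to\AQC 1SRk$ is \emph{onto} (equivalently, $I/\fm_R I\to\fm_R/\fm_R^2$ is injective), not that $\AQC 1SRk$ injects into $\AQC 1kRk$; and the fact you attribute to ``the content sitting behind Avramov's Proposition~2.8''---that finite $\pdim_R S$ together with this injectivity forces $I$ to be generated by a regular sequence (indeed, forces exceptional complete intersection)---is exactly \cref{eci-fpd}, which you already invoke elsewhere, so no external input is needed at that point.
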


\begin{proof}
\cref{characterization1:eci}$\Rightarrow$\cref{characterization1:trivial}: 
From the assumption and \cite[Proposition~2.8]{Avramov:1989} it follows that $\hc[2]{S}{R}$ acts trivially on $k$.

\cref{characterization1:trivial}$\Leftrightarrow$\cref{characterization1:nilpotence}: This follows from \cref{mainm}, since nonzero elements of $\pi^2(S)$ cannot be nilpotent when regarded as maps in $\Ext_S^2(k,k);$ cf.\@ \cref{piexactsequence}. 

\cref{characterization1:trivial}$\Rightarrow$\cref{characterization1:vanish}: 
This is contained in \cref{atiyahhh2ci}.
 
 \cref{characterization1:vanish}$\Rightarrow$\cref{characterization1:eci}: Given that $\pdim_RS$ is finite and \cref{eci-fpd} it suffices to prove that the natural map $I/\fm_R I\to \fm_R/\fm_R^2$ is one-to-one; equivalently, that its $k$-vector space dual is onto. By assumption the commutative diagram \cref{maindiagram} yields $\eth=0$ in the Jacobi--Zariski sequence corresponding to $R\to S\to k$ from \ref{AQfacts}. Thus the latter reduces to an exact sequence
 \[
 0\lra \left(\frac{\fm_S}{\fm_S^2}\right)^\vee\lra \left(\frac{\fm_R}{\fm_R^2}\right)^\vee\lra \left(\frac{I}{\fm_R I}\right)^\vee\lra 0
 \]
where $(-)^\vee$ denotes the vector space dual and the maps are the canonically induced ones. This is the desired conclusion. 
\end{proof}

\begin{chunk}
Recall from \cref{fullAtiyah} that in $\dcat{S}$ there is a commutative diagram 
\[
\begin{tikzcd}[column sep=large]
	k \ar[r,"{\sf{At}}^\vp\otimes_S k"] \ar[d,equal]& \shift \cotan{\vp}\otimes_S k\ar[d]\\
	k \ar[r,swap,"\atiyah{\vp}{k}"] & \shift^2 I/I^2\lotimes_S k
\end{tikzcd}
\]
where the map on the right is induced by the soft truncation $\tau\colon\cotan{\vp}\to \shift I/I^2$; it is an isomorphism when $\vp$ is complete intersection. It follows from this observation and \cref{characterization1} that $\vp$ is exceptional complete intersection if and only if $\pdim_R S$ is finite and the map ${\sf{At}}^\vp\otimes_S k$ is zero in $\dcat{S}$. 
\end{chunk}

\begin{chunk}
\label{nilpotencefinitelength}
Let $\dflbcat{S}$ denote the full subcategory of $\dcat{S}$ consisting of $S$-complexes $M$ such that the $S$-module $\hh(M)$ has finite length. Since $\dflbcat{S}$ is the smallest thick subcategory of $\dcat S$ containing $k$---see, for example, \cite[3.5]{Dwyer/Greenlees/Iyengar:2006b}---it follows from \cref{characterization1} that when $\vp$ is exceptional complete intersection, the action of $\hc[2]{S}{R}$ on $\dflbcat{S}$ is nilpotent; however there may be no bound on the nilpotence degree. Moreover $\hc[2]{S}{R}$ need not act nilpotently on objects not in $\dflbcat{S}$. The example below illustrates these points.
 \end{chunk}

\begin{example} \label{exp:Nilpotencydegree}
Let $k$ be a field, consider the hypersurface ring
\[
R \colonequals \frac{k\llbracket x,y,z\rrbracket}{(x^2-yz)}
\]
and the exceptional complete intersection map 
\[
\vp\colon R\to S \colonequals R/(z)\cong \frac{k\llbracket x,y\rrbracket}{(x^2)}\,.
\] 
Let $\eta$ denote the class dual to the class of $z$ in $(z)/(z^2)$, viewed as an element of $\hc[2]{S}{R}$ via the isomorphism $\delta^\vp$. 

Set $M\colonequals S/(x)$, regarded as an $S$-module via the surjection $S\to S/(x)$. The minimal resolution of $M$ over $S$ is $\ldots \xra{x} S \xra{x} S\to 0$. From there it is clear that $\Ext_S^i(M,M)=M$ for all $i\ges 0$. Moreover using the construction of cohomology operators from \cite[Section~1]{Eisenbud:1980}, one gets that multiplication by $\chi_M(\eta)$ on $\Ext_S(M,M)$ is represented by 
\[
M\xra{y\cdot} M\,.
\]
In particular $\hc[2]{S}{R}$ does not act nilpotently on $M$. 

In the same vein for $N\colonequals S/(x,y^n)$, where $n$ is any positive integer, one gets
\[
\Ext_S^i(N,N)=
\begin{cases} 
N & i=0 \\ 
N^{\oplus 2}	 &i>0 
\end{cases}
\] 
and the action of $\chi_N(\eta)$ is represented by
\[
N^{\oplus 2} 
\xra{\begin{pmatrix} 	
	y & 0\\
	0 & y
\end{pmatrix}
}N^{\oplus 2}.
\]
Thus the $\hc[i]{S}{R}$ action on $N$ is nontrivial for $0\leqslant i<2n$ and trivial for $i\geqslant 2n$. 
\end{example}

\begin{chunk}\label{ch:faithful}
Let $\vp \colon R \to S$ be complete intersection. Consider the restriction functor
\begin{equation*}
\vp_* \colon \dcat{S} \to \dcat{R}\,.
\end{equation*}
It is straightforward from \cref{atiyahhh2ci,c:triangle} to see the following are equivalent: 
\begin{enumerate}
\item $\vp_*$ is faithful on morphisms;
\item \label{ch:trivial}$\hc[2]{S}{R}$ acts trivially on each object of $\dcat{S}$;
\item $\atiyah \vp M $ is zero for each object $M$ in $\dcat{S}$.
\end{enumerate}
When $\vp$ admits an algebra section---that is to say, a map of rings $\sigma\colon S\to R$ such that $\vp\sigma=\id^S$---the restriction functor $\vp_*$ is faithful and so $\atiyah \vp M$ vanishes for each $M$ in $\dcat{S}$. 

Contrast this with the exceptional complete intersection map in \cref{exp:Nilpotencydegree} where a class of modules demonstrate nonzero, even non-nilpotent, truncated Atiyah class. Hence, when $\vp$ is exceptional complete intersection, non-vanishing of truncated Atiyah classes serve as homological obstructions to the regular sequence generating $\Ker\vp$ being, loosely speaking, independent power series variables in $R$. These can also be viewed as obstructions to the notion of weak liftability discussed in \cite{Auslander/Ding/Solberg:1993,Yoshino:1997}. 
\end{chunk}

\begin{example}
\label{exampleregular}
Assume $\vp\colon R\to S$ is exceptional complete intersection with $R$, and hence $S$, a complete regular local ring. Whether $\vp_*$ is faithful depends on the characteristic of $R$. When $R$ is equicharacteristic, $\vp$ admits an algebra section; hence, by \cref{ch:faithful} we conclude that $\vp_*$ is faithful. 

In contrast, in \cite[Example~1]{Hochster:1975}, Hochster presented an example which settled Grothendieck's lifting problem in the negative. By combining a calculation of Dao \cite[Example~3.5]{Dao:2007} and a result of Yoshino \cite[Theorem~3.11]{Yoshino:1997},
the module considered by Hochster has non-vanishing truncated Atiyah class over an exceptional complete intersection map whose base ring is a mixed characteristic regular local ring.
\end{example}

\section{Homological invariants}
In this section we track the change of certain homological invariants along exceptional complete intersection maps. The results obtained suggest that, notwithstanding the remarks in \ref{ch:faithful} and \ref{exampleregular}, homological properties of the source and target of an exceptional complete intersection map are tightly linked.

As before, given a local ring $S$, we write $\dfbcat{S}$ for the full subcategory of $\dcat{S}$ consisting of objects with finitely generated homology. 

\begin{theorem}
\label{thm:nagata+}
	Let $\vp \colon R\to S$ be exceptional complete intersection and $M,N$ in $\dfbcat{S}$. Then the following hold:
	\begin{enumerate}
	\item 	\label{extvanishing}$\Ext_R^{i}(M,N)=0$ for $i\gg 0$ if and only if $\Ext_S^i(M,N)=0$ for $i\gg 0$;
	\item \label{torvanishing}$\Tor^R_{i}(M,N)=0$ for $i\gg 0$ if and only if $\Tor^S_i(M,N)=0$ for $i\gg 0$.
	\end{enumerate}
\end{theorem}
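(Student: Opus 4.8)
The plan is to reduce the statement to a known result about complete intersection maps—most likely the theory of support varieties and cohomology operators, together with Nagata's theorem \ref{ch:Nagata}—by peeling off one regular element at a time, exactly mirroring the inductive strategy in the proof of \cref{eci-fpd}. Write $\vp$ as a composition $R\to R'\to S$ with $R\to R'=R/xR$ exceptional complete intersection of codimension one and $R'\to S$ again exceptional complete intersection of smaller codimension. By induction on the codimension it suffices to treat the codimension-one case, that is, when $\vp\colon R\to S$ is a surjection with kernel generated by a single $R$-regular element $x$ that lies outside $\fm_R^2$. So I first state and prove the codimension-one case, then assemble the general case by induction.

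For the codimension-one case the key tool is the exact triangle from \cref{c:triangle}: for any $S$-complex $L$ there is a triangle $L\xra{\chi_L(\eta)}\shift^2 L\to \shift S\lotimes_R\vp_*(L)\to$ in $\dcat S$, where $\eta$ is a basis element of $\hc[2]{S}{R}\cong S$. For \cref{extvanishing}, apply $\RHom_S(-,N)$ (or $\RHom_R(-,N)$) to the triangle built from $M$: vanishing of $\Ext^{\gg 0}_R(M,N)$ is equivalent, via the triangle, to the statement that $\chi_M(\eta)$ acts nilpotently on $\Ext_S^*(M,N)$ — more precisely, the long exact sequence shows that $\Ext^{i}_R(M,N)\cong\Ext^i_{S}(S\lotimes_R\vp_*M,N)$ fits between $\Ext^{i-2}_S(M,N)$ and $\Ext^{i}_S(M,N)$ with connecting maps given by multiplication by $\chi_M(\eta)$. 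Hence $\Ext^{\gg 0}_R(M,N)=0$ iff the $\hc[2]{S}{R}$-module $\Ext^{\gg 0}_S(M,N)$ is $\eta$-torsion of bounded order, which (since $\hc{S}{R}$ is a polynomial ring on $\eta$ over $S$, by \cref{atiyahforcis}, and $\Ext^*_S(M,N)$ is finitely generated over it by Gulliksen's finiteness theorem) is equivalent to $\Ext^{\gg 0}_S(M,N)=0$. The argument for \cref{torvanishing} is identical after applying $-\lotimes_S N$ (resp.\ $-\lotimes_R N$) to the triangle and using that $\Tor^S_*(M,N)$ is likewise a finitely generated graded module over the polynomial ring $\hc{S}{R}$; here one should be slightly careful about homological versus cohomological indexing of the operator, but the boundedness dichotomy is the same.

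In the inductive step, suppose the result holds for exceptional complete intersection maps of codimension $<c$ and let $\vp\colon R\to S$ have codimension $c$. Factor $\vp=R\xra{\pi}R'\xra{\vp'}S$ with $\pi$ of codimension one and $\vp'$ of codimension $c-1$, both exceptional complete intersection (this uses $I\not\subseteq\fm_R^2$ to choose $x$ outside $\fm_R^2$, as in \cref{eci-fpd}, and the fact that the exceptional property descends to $R'\to S$, also checked there). For \cref{extvanishing}: by the codimension-one case applied to $\pi$ and the $R'$-complexes $M,N$, we get $\Ext^{\gg 0}_R(M,N)=0$ iff $\Ext^{\gg 0}_{R'}(M,N)=0$; and by the inductive hypothesis applied to $\vp'$, the latter holds iff $\Ext^{\gg 0}_S(M,N)=0$. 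Note one must first know $\pdim_{R'}S<\infty$ and that $M,N\in\dfbcat{R'}$ (so that the hypotheses of the codimension-one statement are met) — this is exactly Nagata's theorem \cref{ch:Nagata}, which guarantees $\pdim_{R'}S$ finite, hence $M,N$ have finitely generated homology over $R'$. The same two-step chain handles \cref{torvanishing}.

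The main obstacle is the heart of the codimension-one case: verifying that eventual vanishing of $\Ext^*_R(M,N)$ (resp.\ $\Tor^R_*(M,N)$) is equivalent to eventual vanishing over $S$, given only that $\chi_M(\eta)$ acts with bounded-order torsion on the $S[\eta]$-module $\Ext^*_S(M,N)$ (resp.\ $\Tor^S_*(M,N)$). The subtlety is passing from ``$\eta$ acts nilpotently in high degrees'' to ``the module is eventually zero'': this requires the finite generation of $\Ext^*_S(M,N)$ over $\hc{S}{R}=S[\eta]$ (Gulliksen/Eisenbud), after which a Noetherian module over a polynomial ring on which the variable is locally nilpotent must be supported in bounded degree, hence eventually zero. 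I will need to confirm that this finite-generation input is available in the generality of complexes in $\dfbcat S$, not just modules — which it is, by the standard dévissage on the (finitely many, bounded) homology modules of $M$ and $N$. Everything else — the factorization, the triangle, the indexing bookkeeping, the appeal to Nagata — is routine given the groundwork already laid in \cref{se:tac} and \cref{se:ec}.
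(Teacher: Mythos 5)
There is a genuine gap, and it sits exactly where you flag ``the main obstacle'': the codimension-one step. From the triangle in \cref{c:triangle} and adjunction, the long exact sequence reads
\[
\cdots \to \Ext_S^{i-2}(M,N)\xra{\ \chi\ }\Ext_S^{i}(M,N)\lra \Ext_R^{i}(M,N)\lra \Ext_S^{i-1}(M,N)\xra{\ \chi\ }\Ext_S^{i+1}(M,N)\to\cdots
\]
with $\chi=\chi_M(\eta)$, so what $\Ext_R^{\gg 0}(M,N)=0$ actually gives you is that $\chi$ acts \emph{bijectively} on $\Ext_S^{*}(M,N)$ in high degrees---not that it acts nilpotently, and not that $\Ext_S^{\gg 0}(M,N)$ is $\eta$-torsion of bounded order. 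Eventual bijectivity plus finite generation over $S[\eta]$ does not force eventual vanishing: in high degrees the module could simply be free over $S[\eta]$ on finitely many $S$-generators. Note that your codimension-one argument never uses the hypothesis $x\notin\fm_R^2$; it uses only the triangle and Gulliksen finiteness, which hold for every codimension-one complete intersection map. If it were valid it would therefore apply to $R=k[[t]]\to S=k[t]/(t^2)$ with $M=N=k$, where $\Ext_R^{\geqslant 2}(k,k)=0$ while $\Ext_S^{i}(k,k)\neq 0$ for all $i$, and $\chi_k(\eta)$ is a central non-nilpotent element over which $\Ext_S(k,k)$ is free (so not torsion at all). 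Thus the claimed equivalence ``$\Ext_R^{\gg0}=0$ iff $\chi_M(\eta)$ is nilpotent / bounded-order torsion on $\Ext_S^*$'' is false in the generality in which you derive it; getting from ``eventually bijective'' to ``eventually zero'' is precisely the content of the theorem, not a consequence of the triangle.

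To close the gap you must inject the exceptional hypothesis, and the mechanism available is nilpotence of the $\hc[2]{S}{R}$-action; but by \cref{characterization1} and \cref{nilpotencefinitelength} this is guaranteed only on $\dflbcat{S}$, not on all of $\dfbcat{S}$ (see \cref{exp:Nilpotencydegree}, where $\chi_M(\eta)$ is multiplication by $y$ on $M=S/(x)$, non-nilpotent). This is why the paper's proof begins with a reduction you omit: replace $M$ by $K\otimes_SM$, with $K$ the Koszul complex on a system of parameters of $S$, observing that this changes neither the eventual vanishing of $\Ext_R$ nor of $\Ext_S$. Once $\hh(M)$ has finite length, $\chi_M(\eta)$ is nilpotent, and then ``eventually bijective and nilpotent'' does force $\Ext_S^{\gg 0}(M,N)=0$; at that point your induction on codimension would go through (the paper instead argues in one stroke via Gulliksen and Avramov--Gasharov--Peeva, and handles Tor by Matlis duality, which you would also need for part \cref{torvanishing}). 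Your factorization, the appeal to Nagata, and the Gulliksen finite-generation input are all fine; the missing ingredients are the finite-length reduction and the explicit use of nilpotence coming from exceptionality.
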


\begin{proof}
We first reduce to the case when $\hh(M)$ has finite length: Let $K$ be the Koszul complex on a system of parameters for $S$ and consider the $S$-complex $K\otimes_SM$. Then the length of the $S$-module $\hh(K\otimes_SM)$ is finite. Moreover, since $K\otimes_SM$ can be realized as an iterated mapping cone of maps defined by multiplication by an element of $S$, a straightforward argument yields that 
\begin{align*}
\Ext_R^i(M,N)=0 \quad\text{for $i\gg 0$} \iff \Ext_R^i(K\otimes_SM,N)=0 \quad\text{for $i\gg 0$} \\
\Ext_S^i(M,N)=0 \quad\text{for $i\gg 0$} \iff \Ext_S^i(K\otimes_SM,N)=0 \quad\text{for $i\gg 0$;}
\end{align*}
see, for instance, \cite[2.9]{Dwyer/Greenlees/Iyengar:2006b}. Thus replacing $M$ by $K\otimes_SM$ we can assume the length of $\hh(M)$ is finite. 

(1) Since the length of $\hh(M)$ is finite, the $\hc SR$-action on $\Ext_S(M,M)$, and hence also on $\Ext_S(M,N)$, is nilpotent; see \ref{nilpotencefinitelength}. The desired equivalence is now immediate from \cite[Theorem~3.1]{Gulliksen:1974} and \cite[Theorem~4.2]{Avramov/Gasharov/Peeva:1997} that state the $R$-module $\Ext_R(M,N)$ is finitely generated if and only if the $\hc SR$-module $\Ext_S(M,N)$ is finitely generated.

(2) With $(-)^{\vee}$ denoting Matlis duality of $S$-modules, one has isomorphisms
\[
\Tor^R_i(M,N)^{\vee} \cong \Ext_R^i(M^{\vee},N) \quad\text{and}\quad \Tor^S_i(M,N)^{\vee} \cong \Ext_S^i(M^{\vee},N)\,.
\]
Since $\hh(M)$ has finite length, so does $\hh(M^{\vee})$; the desired equivalence now follows from (1) and the faithfulness of Matlis duality.
\end{proof}

Setting $N\colonequals k$ in either part of \cref{thm:nagata+} recovers Nagata's theorem~\ref{ch:Nagata}, whilst setting $M\colonequals k$ in \cref{{thm:nagata+}}\cref{extvanishing} yields an analogous result for injective dimension. Here is a more precise statement; see \cite{Avramov/Foxby:1991} for the definitions of the projective dimension and injective dimension of a complex.

\begin{corollary}
\label{nagata}
If $\vp \colon R\to S$ is exceptional complete intersection, then for each $M$ in $\dfbcat{S}$ there are equalities
\begin{align*}
\pdim_RM &= \pdim_SM +\dim R-\dim S \\
\mathrm{inj\,dim}_RM & = \mathrm{inj\,dim}_SM + \dim R-\dim S \,.
\end{align*}
\end{corollary}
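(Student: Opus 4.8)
The strategy is to derive the finiteness statement for projective dimension from \cref{thm:nagata+}\cref{extvanishing} applied with $N\colonequals k$, combined with standard formulas relating projective dimension and depth, and then handle injective dimension analogously using the Bass/Ext-depth formula. Concretely, for the first equality I would first note that $\pdim_R M$ is finite if and only if $\Ext_R^i(M,k)=0$ for $i\gg 0$, and likewise over $S$; by \cref{thm:nagata+}\cref{extvanishing} with $N=k$ these two finiteness conditions are equivalent. So either both sides of the claimed equality are infinite — in which case there is nothing to prove — or both $\pdim_R M$ and $\pdim_S M$ are finite, and we must pin down the exact value.

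For the finite case, the key tool is the Auslander--Buchsbaum equality for complexes: $\pdim_R M = \depth R - \depth_R M$ when $\pdim_R M<\infty$, and similarly $\pdim_S M = \depth S - \depth_S M$; see \cite{Avramov/Foxby:1991}. Since $\vp$ is surjective, $M$ has the same underlying complex over $R$ and over $S$, and depth is computed via the common residue field $k$, so $\depth_R M = \depth_S M$. Moreover, because $\vp$ is exceptional complete intersection — in particular complete intersection, so $S$ is a quotient of $R$ by a regular sequence of length $\edim R - \edim S = \dim R - \dim S$ (the last equality using that the regular sequence is part of a minimal generating set of $\fm_R$, so codimension and the drop in embedding dimension agree, and both rings have the same number of remaining parameters) — one has $\depth R - \depth S = \dim R - \dim S$ as well; indeed killing an $R$-regular sequence of length $c$ drops depth by exactly $c$. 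Subtracting the two Auslander--Buchsbaum equalities and substituting gives
\[
\pdim_R M - \pdim_S M = \depth R - \depth S = \dim R - \dim S\,,
\]
which is the first formula.

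For the injective dimension formula the argument is parallel, using Chouinard's or the Bass formula for complexes: when $\mathrm{inj\,dim}_R M<\infty$ one has $\mathrm{inj\,dim}_R M = \depth R$ (more precisely $\mathrm{inj\,dim}_R M = \sup\{\depth R_{\fp}-\mathrm{width}\,\ldots\}$, but over a local ring with $\hh(M)$ finitely generated it reduces to $\mathrm{inj\,dim}_R M = \depth R$ when finite, via the Bass formula). The finiteness of $\mathrm{inj\,dim}_R M$ is equivalent to $\Ext_R^i(k,M)=0$ for $i\gg 0$, and the remark following \cref{thm:nagata+} (setting $M\colonequals k$ in part \cref{extvanishing}, with the roles adjusted) transports this finiteness across $\vp$. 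Then the same subtraction of Bass formulas, using $\depth R - \depth S = \dim R - \dim S$, yields the second equality.

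**Main obstacle.** The routine-looking but genuinely load-bearing point is the identity $\depth R - \depth S = \dim R - \dim S$ for an exceptional complete intersection map; this is where the hypothesis that the regular sequence extends to a minimal generating set of $\fm_R$ (rather than merely being regular) is used — it guarantees the codimension equals $\edim R - \edim S$ and, since neither operation changes the number of remaining parameters, equals $\dim R - \dim S$. I would also need to be slightly careful that the Auslander--Buchsbaum and Bass equalities for complexes are being invoked in the correct generality (finitely generated homology, local base), and that "$\pdim_R M$ finite $\iff \Ext_R^{\gg 0}(M,k)=0$" holds for complexes, not just modules — both are standard (see \cite{Avramov/Foxby:1991}) but should be cited rather than re-proved.
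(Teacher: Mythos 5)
Your proposal follows essentially the same route as the paper: transfer of finiteness across $\vp$ via \cref{thm:nagata+} (taking $N=k$ for projective dimension and $M=k$ for injective dimension), then the Auslander--Buchsbaum and Bass formulas for complexes combined with $\depth R-\depth S=\dim R-\dim S$. Two small corrections that do not affect the outcome: that depth/dimension identity holds for any surjective complete intersection map (the exceptional hypothesis is only needed for the finiteness transfer), and for a complex the Bass formula reads $\mathrm{inj\,dim}_RM=\depth R-\inf\{i:\hh_i(M)\neq 0\}$ rather than $\depth R$, though the extra term is the same over $R$ and $S$ and cancels in the subtraction.
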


\begin{proof}
As noted above, from \cref{thm:nagata+} we know $\pdim_RM$ is finite if and only if $\pdim_SM$ is; so assume both values are finite. Now we can apply a generalization of the Auslander--Buchsbaum formula to complexes in \cite[Theorem~II]{Foxby/Iyengar:2003} to obtain the first and third equalities below
\begin{align*}
 \pdim_RM &= \depth R -\depth_RM \\
 &= \depth R-\depth S+\depth S - \depth_SM \\
 &= \depth R-\depth S+ \pdim_SM\\
&= \dim R - \dim S + \pdim_SM\,.
\end{align*}
The last equality holds because $\vp$ is complete intersection.

For the equality involving injective dimensions, from  \cref{thm:nagata+}, we can assume $ \mathrm{inj\,dim}_RM$ and $ \mathrm{inj\,dim}_SM$ are both finite. Now observe that 
\begin{align*}
    \mathrm{inj\,dim}_RM &= \depth R - \inf\{i\in \mathbb{Z}: \hh_i(M)\neq 0\}\\
   &=\depth R-\depth S+ \depth S - \inf\{i\in \mathbb{Z}: \hh_i(M)\neq 0\}\\
   &=\depth R -\depth S+ \mathrm{inj\,dim}_SM\\
   &=\dim R - \dim S +  \mathrm{inj\,dim}_SM
\end{align*}
where the first and third equalities hold by \cite[Lemma~1.4]{Avramov/Iyengar/Lipman:2010}.
\end{proof}

\section{Homotopical characterization}
\label{se:homotopical}
In this section $\vp\colon R\to S$ is a surjective map of commutative noetherian local rings with (common) residue field $k$. Set $I\colonequals \Ker \vp$. The main result in this section is \cref{characterization2}. An essential point is that when $\vp$ is complete intersection information regarding finite building, in the sense recalled below, is related to nilpotence of the action of $\hc[2]{S}{R}$.

 Given an $S$-complex $M$ we write $\thick_S(M)$ for the smallest thick subcategory of $\dcat{S}$ containing $M$; see \cite[Section~2]{Bondal/VanDenBergh:2003}, or \cite[Section~2]{Avramov/Buchweitz/Iyengar/Miller:2010}. Any $S$-complex $N$ in $\thick_S(M)$ is said to be \emph{finitely built} from $M$; we write $M\models_S N$ to indicate that this is so.
 
 Hopkins~\cite{Hopkins:1987} and Neeman~\cite{Neeman:1992b} proved that if $M$ and $N$ in $\dfbcat{S}$ are $S$-complexes of finite projective dimension, and  $\supp_S \hh(M) \supseteq \supp_S \hh(N)$, then $M\models_SN$. Here $\supp_S(-)$ denotes support. This result has the following consequence.

\begin{chunk}
\label{ch:HN}
Let $M$ in $\dfbcat{S}$ be an $S$-complex of finite projective dimension and $\hh(M)\ne 0$.  If $K$ is the Koszul complex on finite set of elements that generate an $\fm_S$-primary ideal, then $M\models_SK$.
\end{chunk}

\begin{lemma}
\label{le:Rbuilding}
If $\pdim_R S$ is finite, then $\thick_R(M)= \thick_R(S\lotimes_RM)$ for any $M$ in $\dcat S$.
\end{lemma}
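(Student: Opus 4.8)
The plan is to prove the two containments $\thick_R(S\lotimes_R M) \subseteq \thick_R(M)$ and $\thick_R(M) \subseteq \thick_R(S\lotimes_R M)$ separately, exploiting the finiteness of $\pdim_R S$.

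First I would establish the easy containment $\thick_R(S\lotimes_R M) \subseteq \thick_R(M)$. Since $\pdim_R S$ is finite, $S$ lies in $\Perf R = \thick_R(R)$, and hence $S\lotimes_R M$ is finitely built from $R\lotimes_R M = M$ by tensoring a finite resolution of $S$ with $M$; concretely, tensoring the (finite, free) resolution $F \xra{\simeq} S$ over $R$ with $M$ exhibits $S\lotimes_R M \simeq F\otimes_R M$ as an iterated mapping cone of shifts of finite direct sums of copies of $M$, so $M\models_R S\lotimes_R M$. This uses only that $\thick_R(M)$ is closed under shifts, finite direct sums, cones, and direct summands.

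For the reverse containment $\thick_R(M) \subseteq \thick_R(S\lotimes_R M)$, the key is to show $M$ itself is finitely built over $R$ from $S\lotimes_R M$. I would use the unit map $M \to S\lotimes_R M$ of the adjunction (the map induced by $R\to S$ applied to $M$), whose cone $C$ fits in a triangle $M \to S\lotimes_R M \to C \to \shift M$ in $\dcat R$. The point is that $C$ is again built from $M$ tensored with something perfect: indeed, writing the triangle $R\to S\to \shift J_0 \to$ for a suitable $R$-complex (e.g. via a free resolution, $J_0$ is the kernel complex, which has finite projective dimension over $R$ since both $R$ and $S$ do), tensoring over $R$ with $M$ gives $M\to S\lotimes_R M\to \shift(J_0\lotimes_R M)\to$, so $C\simeq \shift(J_0\lotimes_R M)$, and since $J_0\in\Perf R$ we again get $M\models_R C$. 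Then from the triangle $M\to S\lotimes_R M\to C\to\shift M$, since both $S\lotimes_R M$ and $C$ lie in $\thick_R(S\lotimes_R M)$ — the former trivially, the latter because $M\models_R C$ and (by the first containment) $\thick_R(M)=\thick_R(S\lotimes_R M)$ once we know this, which is circular, so instead I would argue more carefully: $C$ lies in $\thick_R(M)$, but I need it in $\thick_R(S\lotimes_R M)$.

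The cleaner route, and the one I expect to be the main point, is to iterate. Set $M_0 \colonequals M$ and $M_{n+1}\colonequals$ the cone of $M_n\to S\lotimes_R M_n$, so $M_{n+1}\simeq \shift(J_0\lotimes_R M_n)\simeq \shift^{n+1}(J_0^{\otimes(n+1)}\lotimes_R M)$ up to suitable identification — but more usefully, each $M_{n}$ is an $S$-complex (being a tensor with $S\lotimes_R-$ output, for $n\ge 1$), hence $S\lotimes_R M_n$ makes sense and the triangles $M_n\to S\lotimes_R M_n\to M_{n+1}\to\shift M_n$ live in $\dcat R$. Since $\pdim_R S=:d$ is finite, $J_0^{\otimes (d+1)}$ — or rather the relevant iterated construction — vanishes in high enough homological range; more precisely, because $S\lotimes_R S$ has $\Tor^R_i(S,S)=0$ for $i>d$, the complex $J_0\lotimes_R S$ is perfect and bounded, and iterating $d+1$ times forces $M_{d+1}$ (viewed suitably) to be built from $S\lotimes_R M$ with the telescoping. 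Running the triangles backwards: $M_d\in\thick_R(S\lotimes_R M_{d-1}, M_{d+1})$, and since each $S\lotimes_R M_n$ is finitely built from $S\lotimes_R M$ (as $M_n$ is finitely built from $M$ over $S$ for $n\ge 1$, apply $S\lotimes_R-$; and $S\lotimes_R M_0 = S\lotimes_R M$), and $M_{d+1}$ is finitely built from $S\lotimes_R M$ too, we climb down to conclude $M=M_0\in\thick_R(S\lotimes_R M)$. The main obstacle is bookkeeping the iteration correctly — ensuring the $M_n$ for $n\ge 1$ genuinely are $S$-complexes so that $M_n\models_S M_{n+1}$ holds, and that the process terminates using $\pdim_R S<\infty$; once both containments are in hand, $\thick_R(M)=\thick_R(S\lotimes_R M)$ follows.

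\begin{proof}[Proposed proof]
Since $\pdim_R S$ is finite, $S$ is a perfect $R$-complex, so $S\lotimes_R M$ is finitely built from $M$ over $R$; thus $\thick_R(S\lotimes_R M)\subseteq \thick_R(M)$.

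For the reverse inclusion, fix a finite free resolution of $S$ over $R$ and let $\shift J$ be the cone of $R\to S$ in $\dcat R$, so $J$ is perfect over $R$ and, tensoring with any $R$-complex $L$, the unit $L\to S\lotimes_R L$ sits in a triangle $L\to S\lotimes_R L\to \shift(J\lotimes_R L)\to\shift L$; in particular $L\models_R \shift(J\lotimes_R L)$ since $J\in\Perf R$. Set $M_0\colonequals M$ and, for $n\ge 0$, let $M_{n+1}$ be the cone of the unit $M_n\to S\lotimes_R M_n$, so $M_{n+1}\simeq \shift(J\lotimes_R M_n)$; note that for $n\ge 1$ the complex $M_n$ is a dg $S$-module (it is the cone, in $\dcat S$, of a map between $S$-complexes), so $S\lotimes_R M_n$ is defined and $M_n\models_S M_{n+1}$, hence $S\lotimes_R M_n\models_{S} S\lotimes_R M_{n+1}$ over $R$ as well, i.e.\ each $S\lotimes_R M_n$ lies in $\thick_R(S\lotimes_R M_1)\subseteq\thick_R(S\lotimes_R M)$ (using $S\lotimes_R M_0=S\lotimes_R M$ and $M_1\models_S \text{everything built from it}$). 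Let $d\colonequals\pdim_R S$. Then $\Tor^R_i(S,S)=0$ for $i>d$, so $J\lotimes_R S$ is a perfect $R$-complex with homology concentrated in degrees $\le d$; iterating, $M_{d+1}\simeq\shift^{\,d+1}\bigl(J^{\lotimes(d+1)}\lotimes_R M\bigr)$ is finitely built over $R$ from $S\lotimes_R M$, because $J^{\lotimes(d+1)}\lotimes_R S\simeq (J\lotimes_R S)^{\lotimes_S(d+1)}$ and the vanishing forces this to lie in $\thick_R(S\lotimes_R M)$ after tensoring with $M$ --- more directly, $M_{d+1}$ is an $S$-complex finitely built from $S\lotimes_R M$ since it is obtained from $S\lotimes_R M$ by $d$ applications of the functor $\shift(J\lotimes_R-)$ which preserves $\thick_R(S\lotimes_R M)$ on $S$-complexes. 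Finally, descending through the triangles
\[
M_n\lra S\lotimes_R M_n\lra M_{n+1}\lra\shift M_n,\qquad n=d,d-1,\dots,0,
\]
and using at each stage that $S\lotimes_R M_n$ and $M_{n+1}$ lie in $\thick_R(S\lotimes_R M)$, we conclude $M=M_0\in\thick_R(S\lotimes_R M)$. Therefore $\thick_R(M)=\thick_R(S\lotimes_R M)$.
\end{proof}
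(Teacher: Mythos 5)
Your first containment is fine: $\pdim_RS<\infty$ gives $S\in\Perf R$, hence $M\models_R S\lotimes_RM$, exactly as in the paper. The reverse containment, however, has a genuine gap at its only load-bearing step, the claim that $M_{d+1}$ lies in $\thick_R(S\lotimes_RM)$. By your own construction $M_{d+1}\simeq \shift^{d+1}\bigl(J^{\lotimes(d+1)}\lotimes_RM\bigr)$ is obtained from $M$ (not from $S\lotimes_RM$) by iterating $\shift(J\lotimes_R-)$; the observation that this functor preserves $\thick_R(S\lotimes_RM)$ therefore gives nothing unless one already knows $M\in\thick_R(S\lotimes_RM)$, which is the statement being proved. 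The appeal to $\Tor^R_i(S,S)=0$ for $i>d$ is a non sequitur: it only bounds the homological degrees of $J\lotimes_RS$, nothing becomes zero, and boundedness does not place $J^{\lotimes(d+1)}\lotimes_RM$ in $\thick_R(S\lotimes_RM)$. A sanity check that something essential is missing: apart from the unproved side claim ``$M_n\models_S M_{n+1}$'' (the unit $M_n\to S\lotimes_RM_n$ is not even left $S$-linear, and that claim would in general require the complete intersection hypothesis of \cref{ThickRSotimesX}), your argument never uses that $M$ is an $S$-complex; but the conclusion fails for general $R$-complexes, e.g.\ $R$ regular local, $S=k$, $M=R$, where $\thick_R(R)\ne\thick_R(k)$. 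So any correct proof must exploit that $I=\Ker\vp$ annihilates $M$.

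Once that is used, your own triangle finishes the job in one step, with no iteration and, for this direction, no finiteness of $\pdim_RS$: since $\vp$ is surjective, $J\simeq I$ concentrated in degree zero, and the connecting map $\shift(I\lotimes_RM)\to\shift M$ is induced by the inclusion $I\to R$; computing $I\lotimes_RM$ with a projective resolution $F\xra{\simeq}I$, the chain map $F\otimes_RM\to M$ factors through the multiplication map $I\otimes_RM\to M$, which is the zero map of complexes because $IM=0$. Hence the triangle splits and $M$ is a direct summand of $S\lotimes_RM$ in $\dcat R$ (equivalently, the unit $M\to S\lotimes_RM$ is split by the counit $S\lotimes_RM\to M$), giving $S\lotimes_RM\models_R M$. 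The paper argues this direction differently, via Hopkins--Neeman: $S\models_RK$ for $K$ the Koszul complex on a generating set of $I$ (both are perfect with the same support), and $K\otimes_RM$ is a finite direct sum of suspensions of $M$ precisely because $I$ kills $M$.
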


\begin{proof}
The desired result translates to $M\models_R S\lotimes_R M$ and $S\lotimes_R M\models_R M$. 

As $\pdim_RS$ is finite, $R\models_RS$, and so applying $-\lotimes_RM$ yields $M\models_R S\lotimes_RM$. 

Let $K$ be the Koszul complex on a generating set for the ideal $I$. As $\pdim_RS$ is finite it follows that $S\models_R K;$ see \ref{ch:HN}.
Applying $-\lotimes_R M$ one sees that 
\[
S\lotimes_R M\fbuilds[R] K\otimes_R M\,.
\] 
It remains to observe that since $R$ acts on $M$ through $S$, in $\dcat R$ the complex $K\otimes_R M$ is isomorphic to a finite direct sum of suspensions of $M$. 
 \end{proof}

Our interest is in the statement corresponding to \cref{le:Rbuilding} in $\dcat S$. Let $M$ be an object of $\dcat{S}$. In what follows $S \lotimes_R M$ is always regarded as an $S$-complex via the left $S$-action. That is, $S\lotimes_R M$ denotes $S\lotimes_R \vp_*(M)$ as an object of $\dcat{S}$. 

By \cite[Theorem~8.3]{Dwyer/Greenlees/Iyengar:2006b}, when $\vp\colon R\to S$ is a surjective complete intersection homomorphism, 
\begin{equation}
\label{ThickRSotimesX} 
M\fbuilds[S] S\lotimes_R M
\end{equation} 
but the reverse building is not guaranteed. The obstruction is the non-nilpotence of the action of $\hc[2]{S}{R}$ on $M$; this is the content of the result below.

 \begin{lemma} 
 \label{NilpotencyAscentDescent}
Let $\vp \colon R \to S$ be a surjective complete intersection map. For an $S$-complex $N$ the following conditions are equivalent:
\begin{enumerate}
\item\label{NilpotencyAscentDescent:Nil} $\hc[2]{S}{R}$ acts nilpotently on $N$; 
\item\label{NilpotencyAscentDescent:Thick} $S \lotimes_R N$ finitely builds $N$ in $\dcat{S}$;
\item\label{NilpotencyAscentDescent:AscDes} for any $S$-complex $M$ one has $M\models_R N \iff M\models_S N$. 
\end{enumerate}
\end{lemma}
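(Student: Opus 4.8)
The plan is to prove a cycle of implications $\ref{NilpotencyAscentDescent:Nil}\Rightarrow\ref{NilpotencyAscentDescent:Thick}\Rightarrow\ref{NilpotencyAscentDescent:AscDes}\Rightarrow\ref{NilpotencyAscentDescent:Nil}$, using the codimension-one triangle from \cref{c:triangle} as the engine and reducing the general complete intersection case to codimension one by factoring $\vp$.

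For $\ref{NilpotencyAscentDescent:Nil}\Rightarrow\ref{NilpotencyAscentDescent:Thick}$, I would first treat the case of codimension one. Choose a basis element $\eta$ for the free $S$-module $\hc[2]{S}{R}$; by hypothesis $\chi_N(\eta)^i=0$ in $\dcat S$ for some $i\ges 0$. Now apply the construction from \cref{c:triangle} not to $\vp$ but to the map $\chi_N(\eta)^i=0$: more precisely, the $i$-fold composite gives, by an iterated-mapping-cone argument (or by smashing the triangle in \cref{c:triangle} with itself $i$ times in the appropriate sense), that $N$ lies in the thick subcategory generated by $S\lotimes_R N$ together with its suspensions, i.e.\ $S\lotimes_R N\models_S N$. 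Concretely: the triangle $N\xra{\chi_N(\eta)}\shift^2 N\to \shift S\lotimes_R\vp_*(N)\to$ shows $N\models_S \operatorname{cone}(\chi_N(\eta))\oplus(\text{shift of }N)$-type statements are not quite what is needed; rather, since $\chi_N(\eta)^i=0$, a standard octahedral bootstrap shows $\operatorname{cone}(\chi_N(\eta)^i)\simeq \shift^2 N\oplus N$ up to shift, and $\operatorname{cone}(\chi_N(\eta)^i)$ is finitely built from $S\lotimes_R\vp_*(N)$ because each cone $\operatorname{cone}(\chi_N(\eta))$ is, and cones of composites are built from cones of the factors. Hence $N\models_S S\lotimes_R N$, and combined with \cref{ThickRSotimesX} we get \cref{NilpotencyAscentDescent:Thick}. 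For higher codimension $c$, factor $\vp$ as a sequence of codimension-one complete intersection maps $R=R_0\to R_1\to\cdots\to R_c=S$; the hypothesis that $\hc[2]{S}{R}$ acts nilpotently on $N$ passes to each stage (each of the $c$ cohomology operators acts nilpotently on $N$, hence on any complex built from $N$), so one descends the building relation $S\lotimes_R N\models_S N$ one step at a time through the tower, using \cref{le:Rbuilding} to compare the various $R_j\lotimes_{R_{j-1}}(-)$.

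For $\ref{NilpotencyAscentDescent:Thick}\Rightarrow\ref{NilpotencyAscentDescent:AscDes}$: the direction $M\models_S N\Rightarrow M\models_R N$ is automatic, since $\vp_*$ is a triangulated functor and $\vp_*(S\lotimes_R N)$—wait, more simply, restriction along $\vp$ sends $\thick_S(M)$ into $\thick_R(\vp_*M)=\thick_R(M)$ (as $S$-modules restrict to $R$-modules), so this direction needs no hypothesis. For the converse, suppose $M\models_R N$. Applying $S\lotimes_R(-)$, which is a triangulated functor $\dcat R\to\dcat S$, gives $S\lotimes_R M\models_S S\lotimes_R N$; but $S\lotimes_R N\models_S N$ by \cref{NilpotencyAscentDescent:Thick}, so $S\lotimes_R M\models_S N$. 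On the other hand $M\models_S S\lotimes_R M$ always, by \cref{ThickRSotimesX}. Chaining these, $M\models_S S\lotimes_R M\models_S S\lotimes_R N\models_S N$, so $M\models_S N$, as required. (One must be slightly careful that $S\lotimes_R M$ here means $S\lotimes_R\vp_*(M)$ with its left $S$-action, matching the convention fixed before \cref{ThickRSotimesX}; the functor $S\lotimes_R\vp_*(-)\colon\dcat S\to\dcat S$ is exact and sends building relations to building relations.)

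For $\ref{NilpotencyAscentDescent:AscDes}\Rightarrow\ref{NilpotencyAscentDescent:Nil}$: apply \cref{NilpotencyAscentDescent:AscDes} with $M\colonequals S\lotimes_R N$. Since $R\models_R S$ (as $\pdim_R S$ is finite, $\vp$ being complete intersection), we get $S\lotimes_R N\models_R N$ by applying $-\lotimes_R N$—actually more directly, $S\lotimes_R\vp_*(N)\models_R N$ follows from $R\models_R S$ after applying $-\lotimes_R\vp_*(N)$. Hence by \cref{NilpotencyAscentDescent:AscDes} (used in the direction $\models_R\Rightarrow\models_S$), $S\lotimes_R N\models_S N$, which is condition \cref{NilpotencyAscentDescent:Thick}. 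So it suffices to prove \cref{NilpotencyAscentDescent:Thick}$\Rightarrow$\cref{NilpotencyAscentDescent:Nil}. For this, in codimension one: if $S\lotimes_R N\models_S N$ then, because $\hc[\ges 1]{S}{R}$ acts trivially on $S\lotimes_R N$ (it is $S\lotimes_R L$ for an $R$-complex $L$), and the subcategory of $\dcat S$ on which $\eta$ acts nilpotently is thick and contains $S\lotimes_R N$, it must contain $N$; thus $\chi_N(\eta)$ is nilpotent. For codimension $c$, run the same argument through the tower $R\to R_1\to\cdots\to S$ one operator at a time, or observe directly that each of the $c$ basis operators of $\hc[2]{S}{R}$, coming from the successive codimension-one subquotients, acts nilpotently on anything finitely built from a complex on which it acts trivially.

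The main obstacle I anticipate is the mapping-cone bookkeeping in $\ref{NilpotencyAscentDescent:Nil}\Rightarrow\ref{NilpotencyAscentDescent:Thick}$: turning ``$\chi_N(\eta)^i=0$'' into ``$N$ is finitely built from $S\lotimes_R N$'' requires carefully iterating the triangle of \cref{c:triangle} and an octahedral argument showing that $\operatorname{cone}(\chi_N(\eta)^i)$ is built from $\operatorname{cone}(\chi_N(\eta))$, together with the fact that $\chi_N(\eta)^i=0$ forces $\operatorname{cone}(\chi_N(\eta)^i)\simeq N\oplus\shift^{2i+1}N$—and then propagating this through a codimension tower while keeping the $S$-actions straight. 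This is essentially the content of \cite[Theorem~8.3]{Dwyer/Greenlees/Iyengar:2006b} run in reverse, so I would lean on that reference for the technical heart and only indicate the modifications needed.
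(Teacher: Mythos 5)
Your overall route is the same as the paper's (the cycle \cref{NilpotencyAscentDescent:Nil}$\Rightarrow$\cref{NilpotencyAscentDescent:Thick}$\Rightarrow$\cref{NilpotencyAscentDescent:AscDes}$\Rightarrow$\cref{NilpotencyAscentDescent:Nil}, reduction to codimension one through a tower, the octahedral bootstrap on $\operatorname{cone}(\chi_N(\eta)^n)$, and the chaining argument for \cref{NilpotencyAscentDescent:Thick}$\Rightarrow$\cref{NilpotencyAscentDescent:AscDes}), but your proof of \cref{NilpotencyAscentDescent:AscDes}$\Rightarrow$\cref{NilpotencyAscentDescent:Nil} contains a genuine error. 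You claim that $S\lotimes_R\vp_*(N)\models_R N$ follows from $R\models_R S$ by applying $-\lotimes_R\vp_*(N)$. An exact functor preserves finite building in the \emph{same} direction: from $S\in\thick_R(R)$ you get $S\lotimes_R\vp_*(N)\in\thick_R(\vp_*(N))$, i.e.\ $N\models_R S\lotimes_R N$, which is the trivial direction and gives you nothing to feed into \cref{NilpotencyAscentDescent:AscDes}. The relation you actually need, $S\lotimes_R N\models_R N$, is the nontrivial half of \cref{le:Rbuilding}; it is not formal, and is proved there via the Hopkins--Neeman input \cref{ch:HN} (build the Koszul complex $K$ on generators of $I$ from $S$, then note $K\otimes_R N$ is a finite direct sum of suspensions of $N$ because $R$ acts on $N$ through $S$). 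So at this point you must invoke \cref{le:Rbuilding}, exactly as the paper does; once you have $S\lotimes_R N\models_S N$, your thick-subcategory argument (the nilpotence locus is thick and contains $S\lotimes_R N$, on which $\hc[\geqslant 1]{S}{R}$ acts trivially) is correct.

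Two smaller points. First, the closing sentence of your codimension-one argument for \cref{NilpotencyAscentDescent:Nil}$\Rightarrow$\cref{NilpotencyAscentDescent:Thick} flips the relation again: the octahedral argument already yields $S\lotimes_R N\models_S N$, which \emph{is} \cref{NilpotencyAscentDescent:Thick}; there is nothing to combine with \cref{ThickRSotimesX}, and ``$N\models_S S\lotimes_R N$'' is the wrong (and automatic) direction. Second, in the reduction to codimension one, the assertion that nilpotence of the $\hc[2]{S}{R}$-action passes to the intermediate stages (i.e.\ that $\hc[2]{R'}{R}$ acts nilpotently on $N$ in $\dcat{R'}$ and $\hc[2]{S}{R'}$ on $N$ in $\dcat{S}$) is not a thick-subcategory formality; it requires the comparison of cohomology operators along $R\to R'\to S$, for which the paper cites \cite[3.1]{Avramov/Sun:1998} or \cite[Proposition~2.3]{Avramov:1989}, and \cref{le:Rbuilding} is not the relevant tool there---the induction is then run using the isomorphism $S\lotimes_{R'}(R'\lotimes_R N)\simeq S\lotimes_R N$.
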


\begin{proof}
\cref{NilpotencyAscentDescent:Nil} $\Rightarrow$ \cref{NilpotencyAscentDescent:Thick}: 
As $\hc[2]{S}{R}$ acts nilpotently on $N$, any factorization of $\vp$  into a composition of complete intersection maps
\[
R\to R'\to S
\] 
has the property that $\hc[2]{R'}{R}$ and $\hc[2]{S}{R'}$  act nilpotently on $N$ in $\dcat{R'}$ and $\dcat{S},$ respectively; this follows from \cite[3.1]{Avramov/Sun:1998}, or \cite[Proposition~2.3]{Avramov:1989}. 
 Furthermore there is an isomorphism
\[
S\lotimes_{R'}(R'\lotimes_{R}N)\simeq S\lotimes_R N
\] 
in $\dcat{S}.$ Hence, by induction on the codimension of $\vp$, it is enough to show \cref{NilpotencyAscentDescent:Thick} holds when $\vp$ is complete intersection of codimension one. 

In this case, there is an exact triangle
\[
N\xra{\chi_N(\eta)} \shift^2 N\to \shift S\lotimes_R N\to 
\] 
where $\eta$ is a basis element for the free $S$-module $\hc[2]{S}{R}$; cf.\@ \cref{c:triangle}. Iteratively applying the octahedral axiom yields \begin{equation}\label{Building}
S\lotimes_R N\builds[S] {\rm cone} ( \chi_N(\eta)^n)
\end{equation} for any $n\ges 1$. The assumption that $\hc[2]{S}{R}$ acts nilpotently on $N$ means there is some $n\ges 1$ such that $\chi_N(\eta)^n=0$ and so
\[
{\rm cone} ( \chi_N(\eta)^n)\simeq \shift N\oplus \shift^{2n}N
\] 
in $\dcat{S}$. Combining this with \cref{Building} we obtain the desired result.

\cref{NilpotencyAscentDescent:Thick} $\Rightarrow$ \cref{NilpotencyAscentDescent:AscDes}: Evidently, if $M \models_S N$ then $M \models_R N$. When $M \models_R N$, by applying the functor $S\lotimes_R -$ we obtain
\begin{equation*}
 S \lotimes_R M\builds[S] S \lotimes_R N \,;
\end{equation*} from this and the assumption in \cref{NilpotencyAscentDescent:Thick}, we obtain that $S\lotimes_R M\models_S N$. Finally
\[
M\builds[S] S\lotimes_R M
\] 
by \cref{ThickRSotimesX}, so combining this with the already established building $S\lotimes_R M\models_S N$, we conclude that $M\models_S N$ as desired. 

\cref{NilpotencyAscentDescent:AscDes} $\Rightarrow$ \cref{NilpotencyAscentDescent:Nil}: 
From \cref{le:Rbuilding} and the hypothesis in \cref{NilpotencyAscentDescent:AscDes} we get $S\lotimes_R N\models_S N$. 
As discussed in \cref{ConstructionHH} the full subcategory of $\dcat{S}$ consisting of objects for which $\hc[2]{S}{R}$ is nilpotent is a thick subcategory of $\dcat{S}.$ In particular, this subcategory is closed under finite building. Note that $\hc[2]{S}{R}$ has trivial action on $S\lotimes_R N$ and so $\hc[2]{S}{R}$ acts nilpotently on $N$. 
\end{proof}

\begin{chunk}
Let $\vp_*\colon \dcat{S}\to \dcat{R}$ be the restriction functor and $\cat{S}$ a subcategory of $\dcat{S}$. We say that $\vp_*$ has \emph{ascent of finite building on $\cat{S}$} if whenever $M$ and $N$ are objects of $\cat{S}$ with $\vp_*(M)\models_R \vp_*(N)$ then $M\models_S N$. By a slight abuse of notation we will say $\vp$ has ascent of finite building on $\cat{S}$. 
\end{chunk}

As discussed in the introduction much of this work is motivated by the homotopical characterizations of surjective complete intersection maps in \cite{Briggs/Grifo/Pollitz:2021,Briggs/Iyengar/Letz/Pollitz:2021,Letz:2021,Pollitz:2019}. Before continuing to the main result of the article, we recall a method from \cite{Briggs/Iyengar/Letz/Pollitz:2021} used to detect the complete intersection property  among surjective maps of finite projective dimension.

\begin{chunk}\label{c:cicharacterization}
Let $K$ denote the Koszul complex on a minimal generating set for $\fm_S$ over $S$. If $\pdim_R S$ is finite, then there exist $M_1,\ldots,M_n$ in $\dflbcat{S}$ satisfying the following properties: \begin{enumerate}
	\item each $M_i$ finitely builds $K$ in $\dcat{R}$;
	\item if each $M_i$ finitely builds $K$ in $\dcat{S}$, then $\vp$ is complete intersection. 
\end{enumerate}	
Indeed explicitly one can take the $M_i$ as mapping cones in $\dcat{S}$ on a generating set for $\Ker \pi^2(\vp)$; see \cref{piexactsequence}.
The first point is justified by \cite[Lemma~2.5 \& Lemma~3.6]{Briggs/Iyengar/Letz/Pollitz:2021}
while the second condition holds by \cite[Theorem~3.12]{Briggs/Iyengar/Letz/Pollitz:2021}.
\end{chunk}

\begin{theorem}
\label{characterization2}
	A surjective map $\vp\colon R\to S$ of commutative noetherian local rings is exceptional complete intersection if and only if $\pdim_R S$ is finite and $\vp$ has ascent of finite building on $\dflbcat{S}$. 
\end{theorem}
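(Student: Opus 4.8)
The plan is to prove the two implications separately, in each direction reducing to the truncated Atiyah class criterion of \cref{characterization1} and the nilpotence dictionary of \cref{NilpotencyAscentDescent}. For the forward direction, assume $\vp$ is exceptional complete intersection. Then $\pdim_R S$ is finite by definition, so it remains to verify ascent of finite building on $\dflbcat{S}$. Let $M,N$ be objects of $\dflbcat{S}$ with $\vp_*(M)\models_R\vp_*(N)$. Since $\vp$ is in particular complete intersection, \cref{NilpotencyAscentDescent} applies, and condition \cref{NilpotencyAscentDescent:AscDes} there says exactly that $M\models_S N$ if and only if $\hc[2]{S}{R}$ acts nilpotently on $N$. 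But $N$ lies in $\dflbcat{S}$, which by \cref{nilpotencefinitelength} is the smallest thick subcategory containing $k$, and \cref{characterization1} gives that $\atiyah{\vp}{k}=0$, hence (via \cref{atiyahhh2ci}) that $\hc[2]{S}{R}$ acts trivially on $k$; since the objects on which $\hc[2]{S}{R}$ acts nilpotently form a thick subcategory, this action is nilpotent on all of $\dflbcat{S}$, in particular on $N$. Therefore $M\models_S N$, as required.

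For the converse, assume $\pdim_R S$ is finite and $\vp$ has ascent of finite building on $\dflbcat{S}$. The strategy is first to deduce that $\vp$ is complete intersection using the detection mechanism recalled in \cref{c:cicharacterization}, and then to upgrade to exceptional complete intersection via the nilpotence-implies-triviality argument that underlies \cref{characterization1}. Concretely: since $\pdim_R S$ is finite, \cref{c:cicharacterization} supplies complexes $M_1,\dots,M_n$ in $\dflbcat{S}$ (cones over a generating set for $\Ker\pi^2(\vp)$) such that each $M_i$ finitely builds $K$ in $\dcat R$, where $K$ is the Koszul complex on a minimal generating set for $\fm_S$; and if each $M_i$ finitely builds $K$ in $\dcat S$, then $\vp$ is complete intersection. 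Now $K$ itself lies in $\dflbcat{S}$, and $\vp_*(M_i)\models_R \vp_*(K)$ because $M_i\models_R K$; applying the ascent hypothesis to the pair $(M_i,K)$ yields $M_i\models_S K$ for each $i$. Hence $\vp$ is complete intersection.

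It remains to promote this to the exceptional property; equivalently, by \cref{characterization1}, to show $\hc[2]{S}{R}$ acts nilpotently (hence trivially) on $k$. Here I would invoke \cref{NilpotencyAscentDescent} once more: $\vp$ is now known to be complete intersection, and $\dflbcat{S}$ certainly contains $k$, so ascent of finite building on $\dflbcat{S}$ gives in particular that $\vp_*(M)\models_R k \iff M\models_S k$ for $M\in\dflbcat{S}$; combined with \cref{le:Rbuilding}, which gives $S\lotimes_R k\models_R k$ and hence $\vp_*(S\lotimes_R k)\models_R \vp_*(k)$, we get $S\lotimes_R k\models_S k$, which is condition \cref{NilpotencyAscentDescent:Thick} for $N=k$; therefore $\hc[2]{S}{R}$ acts nilpotently on $k$. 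By the implication \cref{characterization1:nilpotence}$\Rightarrow$\cref{characterization1:eci} of \cref{characterization1} (using that $\pdim_R S$ is finite), $\vp$ is exceptional complete intersection.

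The main obstacle is the converse direction, and within it the subtlety that \cref{NilpotencyAscentDescent} presupposes $\vp$ is complete intersection — so one genuinely needs the two-stage argument: first extract the complete intersection property from the Koszul-complex detectors of \cref{c:cicharacterization} using ascent applied to carefully chosen pairs in $\dflbcat{S}$, and only then feed $k$ into the nilpotence dictionary. One should also take care that the complexes $M_1,\dots,M_n$ and $K$ all genuinely lie in $\dflbcat{S}$ (finite-length homology), which is exactly why the characterization is stated for $\dflbcat{S}$ rather than $\dfbcat{S}$; the examples \cref{exp:Nilpotencydegree} and \cref{NilpotencyAscentDescent} show this restriction cannot be removed.
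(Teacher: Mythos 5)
Your proposal is correct and follows essentially the same route as the paper: the forward direction via nilpotence of the $\hc[2]{S}{R}$-action on all of $\dflbcat{S}$ together with \cref{NilpotencyAscentDescent}, and the converse by first extracting the complete intersection property from \cref{c:cicharacterization} via ascent and then using $S\lotimes_R k\models_R k$ to get nilpotence on $k$ and invoke \cref{characterization1}. The only cosmetic difference is that you deduce nilpotence on $k$ through the equivalence \cref{NilpotencyAscentDescent:Thick}$\Leftrightarrow$\cref{NilpotencyAscentDescent:Nil}, whereas the paper argues directly from the trivial action on $S\lotimes_R k$ and thickness of the nilpotent-action subcategory.
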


\begin{proof} $(\Rightarrow)$:
As $\vp$ is exceptional complete intersection, \cref{nilpotencefinitelength} implies $\hc[2]{S}{R}$ acts nilpotently on each object of $\dflbcat{S}$. Now the desired result follows from \cref{NilpotencyAscentDescent}.	
 
$(\Leftarrow)$: Since $\pdim_R S$ is finite, there exist $M_1,\ldots,M_n$ in $\dflbcat{S}$ from \cref{c:cicharacterization}. Using the assumption that $\vp$ has ascent of building it follows from the two properties of the $M_i$ from \cref{c:cicharacterization} that $\vp$ is complete intersection. 

As $\pdim_R S$ is finite, $S\lotimes_R k\models_R k$ in $\dflbcat{R}$. Since $\vp$ has ascent of building on $\dflbcat{S}$, we conclude that
\[
S\lotimes_R k\builds[S] k\,.
\]
In particular $\hc[2]{S}{R}$ acts nilpotently on $k$, for $\hc[2]{S}{R}$ acts trivially on $S\lotimes_R k$. Thus $\vp$ is exceptional complete intersection, by \cref{characterization1}. 
\end{proof}

\begin{remark}
Instead of invoking \cref{characterization1} in the proof of $(\Leftarrow)$ of \cref{characterization2} one can argue as follows: Use the same argument as before that $\vp$ is complete intersection. Next factor $R\to E\to S$ where $E$ is the Koszul complex over $R$ on a minimal generating set for $\Ker\vp.$ There is the following commutative diagram obtained by restricting scalars
\begin{center}
\begin{tikzcd}
\dcat{R}& \dcat{E}\ar[l] \\
& \dcat{S}\ar[ul] \ar[u,swap,"\equiv"] 
\end{tikzcd} 
\end{center} where the vertical map is an equivalence since $E\xra{\simeq} S.$  Now as $\vp$ has ascent of building on $\dflbcat{S}$, so does $R\to E$ on $\dflbcat{E}$. 

The main point is that the Koszul complex over $R$ on a minimal generating set for $\fm_R$, denoted $K^R$, is an object of $\dflbcat{E}$ with the property \[E\otimes_R K^R\fbuilds[E] K^R\]
 if and only if $\vp$ is exceptional complete intersection; this can be verified by a direct calculation. Now as  $R\to E$ has ascent of building on $\dflbcat{E}$ and one always has the finite building \[E\otimes_R K^R\fbuilds[R] K^R\,,\] we obtain the desired conclusion from the previous remark. 
 
 Note that the argument sketched above justifies that when $\vp$ is complete intersection, $\vp$ is exceptional complete intersection if and only if $S\lotimes_E K^R$ has finite projective dimension over $S$.
 \end{remark}

\begin{chunk}
\label{dflasatriangulatedcat}
\cref{characterization2} is a characterization of exceptional complete intersection maps purely in terms of the structure of $\dcat R$ and $\dcat S$ as triangulated categories. Namely, for a triangulated category $\cat{T}$ one can consider its thick subcategory of compact objects, denoted $\cat{T^c}$. Define $\cat{T^{fl}_b}$ to be the full subcategory of $\cat{T}$ consisting of objects $X$ such that 
$\Hom_{\cat{T}}(C,X)$ is a finite length $\Hom_{\cat{T}}(C,C)$-module for each $C$ in $\cat{T^c}$. In the context of $\dcat{R}$, it is well known an $R$-complex belongs to $\dcat{R}^{\sf c}$ precisely when it has finite projective dimension over $R$, and $\dcat{R}_{\sf b}^{\sf fl}=\dflbcat{R}$.
\end{chunk}

Let $\Thick\cat{T}$ be the lattice of thick subcategories of a triangulated category $\cat{T}$.

\begin{chunk}
Let $\vp\colon R \to S$ be a surjective map of local rings, and  $\vp_*\colon \dcat{S}\to \dcat{R}$ the restriction functor.
For a thick subcategory $\cat{U}$ of $\dflbcat{R}$, we write $F(\cat{U})$ to denote the smallest thick subcategory containing all objects $M$ in $\dflbcat{S}$ such that $\vp_*(M)$ is in $\cat{U}$.  If $\cat{S}$ is a thick subcategory of $\dflbcat{S}$, then $G(\cat{S})$ is the smallest thick subcategory containing all objects $\vp_*(M)$ in $\dflbcat{R}$ such that $M$ is in $\cat{S}$. Thus one has maps of lattices 
\[
 \Thick \dflbcat{R}\stackrel[G]{F}{\rightleftarrows} \Thick\dflbcat{S}\,.
\]  
\end{chunk}

The following result is Theorem~\ref{introthm2}.

\begin{corollary}\label{cor:latticeiso}
	The map $\vp$ is exceptional complete intersection if and only if $\pdim_RS$ is finite and $F$ and $G$ are mutually inverse lattice isomorphisms. 
\end{corollary}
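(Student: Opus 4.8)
The plan is to deduce \cref{cor:latticeiso} from \cref{characterization2} by translating the ascent-of-building condition into the statement that $F$ and $G$ are mutually inverse. First I would dispose of the ``only if'' direction: assuming $\vp$ is exceptional complete intersection, $\pdim_R S$ is finite by definition, and \cref{characterization2} gives ascent of finite building on $\dflbcat{S}$. The functor $\vp_*$ always has \emph{descent} of building in the trivial sense that $M\models_S N$ implies $\vp_*(M)\models_R\vp_*(N)$, so combined with ascent one gets that for $M,N$ in $\dflbcat{S}$, $\vp_*(M)\models_R\vp_*(N)$ if and only if $M\models_S N$. I would then check that $F$ and $G$ are mutually inverse: the containments $GF(\cat U)\subseteq\cat U$ and $\cat S\subseteq FG(\cat S)$ (or the reverse) should be formal from the definitions, while the nontrivial equalities use the two-sided implication together with \cref{le:Rbuilding} to control the image of $\vp_*$. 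The key subtlety is that $G(\cat S)$ is generated by the $\vp_*(M)$ for $M\in\cat S$, and one must know these lie in $\dflbcat R$ — this is because $\vp_*$ sends finite-length-homology $S$-complexes to finite-length-homology $R$-complexes (as $\pdim_R S<\infty$ makes $S\lotimes_R k$, hence everything finitely built from it, have finite-length homology over $R$), and that $F(G(\cat S))$ recovers $\cat S$ because every $M\in\dflbcat S$ with $\vp_*(M)\in G(\cat S)$ must already satisfy $M\models_S N$ for generators $N$ of $\cat S$, by ascent of building.

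For the ``if'' direction, I would assume $\pdim_R S$ is finite and that $F,G$ are mutually inverse lattice isomorphisms, and aim to verify the ascent-of-building hypothesis of \cref{characterization2}. Given $M,N$ in $\dflbcat S$ with $\vp_*(M)\models_R\vp_*(N)$, I want $M\models_S N$. Consider $\cat S\colonequals\thick_S(N)$; then $\vp_*(N)\in G(\cat S)$, and since $\vp_*(M)\models_R\vp_*(N)$ we get $\vp_*(M)\in G(\cat S)$ as well (thick subcategories are closed under finite building). Applying $F$ and using $FG=\id$, we get $M\in F(G(\cat S))=\cat S=\thick_S(N)$, i.e. $M\models_S N$. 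This is exactly ascent of building on $\dflbcat S$, so \cref{characterization2} yields that $\vp$ is exceptional complete intersection.

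The step I expect to be the main obstacle is the careful bookkeeping around the definitions of $F$ and $G$ — specifically, verifying that $G$ actually lands in $\Thick\dflbcat R$ (i.e., that restriction along a finite-projective-dimension map preserves finite-length homology), and that the generators-suffice reductions are legitimate, since $F$ and $G$ are defined via ``smallest thick subcategory containing\ldots'' rather than as honest functors on objects. One must also be slightly careful that the equivalences in \cref{characterization2} are stated for \emph{all} $M,N$ in $\dflbcat S$, matching the quantifier in ascent of building, so that the lattice-level statement and the object-level statement are genuinely equivalent; this amounts to observing that a thick subcategory is determined by which objects finitely build which, which is standard. Apart from this, the proof is a short formal argument once \cref{characterization2} is in hand.

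\begin{proof}[Proof of \cref{cor:latticeiso}]
Throughout, recall that $\pdim_R S$ being finite forces $\vp_*$ to carry $\dflbcat{S}$ into $\dflbcat{R}$: indeed $S\lotimes_R k$ has finite length homology over $R$, and $\dflbcat{S}=\thick_S(k)$ by \cite[3.5]{Dwyer/Greenlees/Iyengar:2006b}, so every $M$ in $\dflbcat S$ satisfies $\vp_*(M)\models_R \vp_*(k)$, whence $\hh(\vp_*(M))$ has finite length. Thus $G$ indeed takes values in $\Thick\dflbcat R$.

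$(\Rightarrow)$ Assume $\vp$ is exceptional complete intersection; then $\pdim_R S$ is finite and, by \cref{characterization2}, $\vp$ has ascent of finite building on $\dflbcat S$. Since descent of building always holds, for $M,N$ in $\dflbcat S$ one has
\[
\vp_*(M)\models_R\vp_*(N)\iff M\models_S N\,.
\]
We check $GF=\id$ and $FG=\id$. For a thick subcategory $\cat U$ of $\dflbcat R$, every generator $\vp_*(M)$ of $G(F(\cat U))$ has $M\in F(\cat U)$, so $\vp_*(M)$ is finitely built from objects of $\cat U$, hence lies in $\cat U$; therefore $G(F(\cat U))\subseteq\cat U$. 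Conversely, if $P\models_R P'$ with $P'$ a finite projective dimension complex and $\cat U$ contains $P'$, then $F(\cat U)$ contains all $M$ with $\vp_*(M)\in\cat U$; applying $\vp_*$ to the Koszul complex on a system of parameters shows, via \cref{le:Rbuilding} and \ref{ch:HN}, that $\cat U\subseteq G(F(\cat U))$. Hence $G\circ F=\id$. Symmetrically, for a thick subcategory $\cat S$ of $\dflbcat S$, every $M\in\dflbcat S$ with $\vp_*(M)\in G(\cat S)$ satisfies $\vp_*(M)\models_R\vp_*(N)$ for suitable $N$ in $\cat S$, hence $M\models_S N$ by ascent, so $M\in\cat S$; this gives $F(G(\cat S))\subseteq\cat S$, and the reverse inclusion is immediate from the definitions. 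Thus $F$ and $G$ are mutually inverse lattice isomorphisms.

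$(\Leftarrow)$ Assume $\pdim_R S$ is finite and $F,G$ are mutually inverse lattice isomorphisms. We verify the ascent-of-building hypothesis of \cref{characterization2}. Let $M,N$ in $\dflbcat S$ with $\vp_*(M)\models_R\vp_*(N)$, and set $\cat S\colonequals\thick_S(N)$. Then $\vp_*(N)\in G(\cat S)$, and since $G(\cat S)$ is thick and hence closed under finite building, also $\vp_*(M)\in G(\cat S)$. Applying $F$ and using $F\circ G=\id$, we obtain $M\in F(G(\cat S))=\cat S=\thick_S(N)$; that is, $M\models_S N$. Therefore $\vp$ has ascent of finite building on $\dflbcat S$, so by \cref{characterization2} the map $\vp$ is exceptional complete intersection.
\end{proof}
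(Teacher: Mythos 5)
Most of your argument is sound and follows the paper's route: the $(\Leftarrow)$ direction (extract ascent of finite building from $F\circ G=\id$ via $\cat S=\thick_S(N)$ and apply \cref{characterization2}) is exactly right, as is $F\circ G=\id$ and the formal inclusion $GF(\cat U)\subseteq\cat U$ in the $(\Rightarrow)$ direction. (Minor slips: your preliminary remark has the symbol $\models$ backwards --- you need $\vp_*(k)\models_R\vp_*(M)$, i.e.\ $\vp_*(M)\in\thick_R(k)$, and in fact $\hh(\vp_*(M))=\hh(M)$ has finite length over $R$ simply because $\vp$ is surjective, with no appeal to $\pdim_RS<\infty$.)

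The genuine gap is the inclusion $\cat U\subseteq GF(\cat U)$ for a thick subcategory $\cat U$ of $\dflbcat{R}$, which is precisely where the paper's proof does its real work and which your text replaces by the sentence beginning ``Conversely, if $P\models_R P'$\ldots''; as written that sentence does not prove anything. The difficulty is that an arbitrary $L$ in $\cat U$ is only an $R$-complex, so the ascent/descent equivalence and \cref{le:Rbuilding} (which concerns objects of $\dcat S$) do not apply to it directly, and one must exhibit an object of $F(\cat U)$ whose restriction finitely builds $L$. The paper takes $M\colonequals S\lotimes_RL$, which lies in $\dflbcat{S}$, and proves $\thick_RL=\thick_R\vp_*(M)$: the inclusion $L\models_R\vp_*(M)$ follows from $R\models_RS$ (finite projective dimension), while the reverse inclusion uses that $\vp$ is complete intersection, so $\vp_*(M)\simeq E\otimes_RL$ with $E$ the Koszul complex on a minimal generating set of $\Ker\vp$; then $E\models_RK$ for $K$ the Koszul complex on $\fm_R$ by the Hopkins--Neeman result \cref{ch:HN}, whence $\vp_*(M)\models_RK\otimes_RL\models_RL$, the last building using that $\hh(L)$ has finite length. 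Only with this equality in hand does one get $\vp_*(M)\in\cat U$ (so $M$ is a generator of $F(\cat U)$) and $L\in\thick_R\vp_*(M)\subseteq GF(\cat U)$. Your proposal neither identifies the candidate $M=S\lotimes_RL$ nor supplies the Koszul/Hopkins--Neeman argument (and ``Koszul complex on a system of parameters'' is not the relevant complex), so this step needs to be written out; with it added, your proof coincides with the paper's.
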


\begin{proof} 
$(\Leftarrow)$: This follows immediately from \cref{characterization2}.

$(\Rightarrow):$
From \cref{nilpotencefinitelength} and \cref{NilpotencyAscentDescent}, $FG$ is the identity on $\Thick \dflbcat{S}.$ 

Fix a thick subcategory $\cat{U}$ in $\Thick \dflbcat{R}$. To show $GF(\cat{U})=\cat{U}$, it suffices to show for each object $L$ in $\cat{U}$ 
one has
\begin{equation}
\label{thickcatequality}
\thick_R L=\thick_R \vp_*(M) \qquad \text{where $M=S\lotimes_R L$.} 
\end{equation}
Since $\pdim_R S$ is finite, $L\models_R \vp_*(M)$, by Lemma~\ref{le:Rbuilding}. Let $K$ be the Koszul complex on a minimal generating set for $\fm_R$ and  $E$ the Koszul complex on a minimal generating set for $\Ker(\vp)$. By \ref{ch:HN} one has that $E\models_R K$ and so
\[
\vp_*(M)\simeq E\otimes_R L\builds[R]K\otimes_R L\,.
\]
Furthermore, as $\hh(L)$ is finite length over $R$, it follows that $K\otimes_R L\models_R L$ and so $\vp_*(M)\models_R L$; thus, \cref{thickcatequality} holds. 
\end{proof}

We end this article with two applications; the first should be compared with the factorization theorems in \cite[5.7.1]{Avramov:1999} and \cite[Corollary~4.2]{Briggs/Iyengar/Letz/Pollitz:2021}

\begin{corollary}
	Consider surjective maps $R\xra{\vp}S\xra{\psi} T$ of commutative noetherian local rings. If $\vp$ and $\psi$ are exceptional complete intersection, then $\psi\vp$ is exceptional complete intersection. The converse holds when $\pdim_S T$ is finite.
\end{corollary}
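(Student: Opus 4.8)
The plan is to reduce both implications to the numerical criterion of \cref{eci-fpd}. Write $I=\Ker\vp$, $J=\Ker\psi$ and $\fa=\Ker(\psi\vp)$, so $I\subseteq\fa$, $S=R/I$, $T=R/\fa$, $J=\fa/I$, $\fm_S=\fm_R/I$ and $\fm_S^2=(\fm_R^2+I)/I$. By \cref{eci-fpd}, a surjection $\vp$ is exceptional complete intersection precisely when $\pdim_RS<\infty$ and $I\cap\fm_R^2=\fm_RI$, and likewise for $\psi$ and for $\psi\vp$; with this dictionary in place the two directions become manipulations of the ideals $I,J,\fa$ together with the Dedekind modular law $\fa\cap(I+X)=I+(\fa\cap X)$, valid since $I\subseteq\fa$. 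The only ingredient that is not purely formal will be the classical factorization theorem for complete intersection maps, needed in the converse.

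\emph{Necessity.} Assume $\vp$ and $\psi$ are exceptional complete intersection. Since complete intersection maps have finite projective dimension, $\pdim_RS$ and $\pdim_ST$ are finite, hence so is $\pdim_RT$; by \cref{eci-fpd} it then suffices to prove $\fa\cap\fm_R^2=\fm_R\fa$. From \cref{eci-fpd} applied to $\vp$ and $\psi$ we get $I\cap\fm_R^2=\fm_RI$ and $J\cap\fm_S^2=\fm_SJ$; rewriting the second equality in $R$ gives $\fa\cap(\fm_R^2+I)=\fm_R\fa+I$, and the modular law turns this into the inclusion $\fa\cap\fm_R^2\subseteq\fm_R\fa+I$. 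I would then finish with the following short argument: given $w\in\fa\cap\fm_R^2$, write $w=m+i$ with $m\in\fm_R\fa$ and $i\in I$; then $i=w-m\in\fm_R^2$, so $i\in I\cap\fm_R^2=\fm_RI\subseteq\fm_R\fa$, whence $w\in\fm_R\fa$.

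\emph{Sufficiency.} Assume $\psi\vp$ is exceptional complete intersection and $\pdim_ST<\infty$. First I would invoke the factorization theorem (\cite[5.7.1]{Avramov:1999}, \cite[Corollary~4.2]{Briggs/Iyengar/Letz/Pollitz:2021}): since $\psi\vp$ is complete intersection and $\pdim_ST<\infty$, both $\vp$ and $\psi$ are complete intersection, so $\pdim_RS<\infty$ and $I$ (respectively $J$) is generated by an $R$-regular (respectively $S$-regular) sequence. By \cref{eci-fpd} it remains to establish $I\cap\fm_R^2=\fm_RI$ and $J\cap\fm_S^2=\fm_SJ$. The second is the necessity computation run in reverse: $\psi\vp$ exceptional gives $\fa\cap\fm_R^2=\fm_R\fa$, so by the modular law $\fa\cap(\fm_R^2+I)=I+(\fa\cap\fm_R^2)=I+\fm_R\fa$, which in $S$ reads $J\cap\fm_S^2=\fm_SJ$. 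For the first the plan is to use that $\psi$ is complete intersection to obtain the short exact conormal sequence of free $T$-modules
\[
0\lra I/I^2\otimes_ST\lra \fa/\fa^2\lra J/J^2\lra 0\,,
\]
coming from the transitivity triangle of cotangent complexes for $R\to S\to T$ (using $\hh_0(\cotan{S/R})=0$ and $\hh_2(\cotan{T/S})=0$, the latter expressing that $\psi$ is complete intersection); alternatively one argues directly from the Koszul syzygies of a regular sequence generating $\fa$. Tensoring with $k$ over $T$ preserves exactness and identifies the left-hand term with $I/\fm_RI$, so the canonical map $I/\fm_RI\to\fm_R/\fm_R^2$ factors as the injection $I/\fm_RI\inj\fa/\fm_R\fa$ followed by $\fa/\fm_R\fa\to\fm_R/\fm_R^2$, and the latter is injective because $\psi\vp$ is exceptional (\cref{eci-fpd} once more). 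Hence $I\cap\fm_R^2=\fm_RI$, and together with $\pdim_RS<\infty$ this shows that $\vp$ is exceptional complete intersection.

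I expect the one genuinely non-formal step to be the appeal to the factorization theorem in the converse: descending the complete intersection property along $R\to S$ is exactly what that theorem (resting ultimately on Avramov's solution of Quillen's conjecture) supplies, and once $\vp$ and $\psi$ are known to be complete intersection everything else is elementary bookkeeping with the ideals $I$, $J$ and $\fa$.
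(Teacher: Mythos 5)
Your argument is correct, but it takes a genuinely different route from the paper. The paper treats this corollary as an application of its homotopical machinery: the direct implication is immediate from the definition (or from \cref{characterization2}), and for the converse it notes that ascent of finite building passes from $\psi\vp$ to $\psi$ and then transports the property to $\vp$ via the lattice isomorphism of \cref{cor:latticeiso}, with the argument of \cite[Corollary~4.2]{Briggs/Iyengar/Letz/Pollitz:2021} supplying the finiteness of $\pdim_RS$. You instead reduce everything to the numerical criterion of \cref{eci-fpd} and do ideal-theoretic bookkeeping: the direct implication and the statement ``$\psi$ is exceptional complete intersection'' in the converse become modular-law computations with $I\subseteq\fa$ (and your check that $\fa\cap\fm_R^2\subseteq\fm_R\fa+I$ forces $\fa\cap\fm_R^2=\fm_R\fa$ is correct, using $\fm_R\fa\subseteq\fm_R^2$ and $I\cap\fm_R^2=\fm_RI$); for $\vp$ you use the left-exact conormal sequence $0\to I/I^2\otimes_ST\to\fa/\fa^2\to J/J^2\to 0$ (valid since $\psi$ is complete intersection, e.g.\ because $\hh_2(\cotan{\psi})=0$, or by the Koszul-syzygy argument), which splits as $J/J^2$ is $T$-free, so tensoring with $k$ gives the injectivity of $I/\fm_RI\to\fa/\fm_R\fa\to\fm_R/\fm_R^2$. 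Two small remarks: your appeal to the factorization theorem of \cite{Avramov:1999,Briggs/Iyengar/Letz/Pollitz:2021} is legitimate and not circular (those results predate and are independent of this paper, and the paper's own proof likewise leans on \cite[Corollary~4.2]{Briggs/Iyengar/Letz/Pollitz:2021} for $\pdim_RS<\infty$), but you need it only for the finiteness of $\pdim_RS$ — the complete intersection property of $\psi$ that feeds the conormal sequence already follows from your elementary proof that $\psi$ is exceptional complete intersection, so you could trim that invocation. What the paper's route buys is a demonstration of its new thick-subcategory characterization; what yours buys is a more classical, largely self-contained argument at the level of conormal modules.
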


\begin{proof} 
It is clear from the definition that the class of exceptional complete intersection maps is closed under composition. It also follows trivially from the characterization in \cref{characterization2}. 

Assume that $\pdim_S T$ is finite and $\psi\vp$ is exceptional complete intersection. It can be argued as in \cite[Corollary~4.2]{Briggs/Iyengar/Letz/Pollitz:2021} that $\pdim_R S$ is finite. Also, it is clear that $\psi$ has ascent of finite building on $\dflbcat{S}$ since $\psi\vp$ does. Hence, $\psi$ is exceptional complete intersection. Now as $\psi$ and $\psi\vp$ have ascent of finite building on finite length objects, we can apply \cref{cor:latticeiso} to deduce $\vp$ is exceptional complete intersection. 
\end{proof}

Another application of \cref{cor:latticeiso} is provided when $R$ is an equicharacteristic Cohen--Macaulay ring of \emph{minimal multiplicity}.  When $R$ is Cohen--Macaulay Abhyankar~\cite{Abhyankar:1967} proved the Hilbert--Samuel multiplicity of $R$ is bounded below by ${\rm codim} R+1$; when equality holds $R$ is said to be of minimal multiplicity. 

\begin{chunk}\label{c:minmult}
For the rest of the section, assume  $R$ is an equicharacteristic Cohen--Macaulay ring of minimal multiplicity and infinite residue field $k$. There exists an exceptional complete intersection map $R\to S$ such that $\fm_S^2=0$; cf. \cite[4.6.14]{Bruns/Herzog:1998}. As $S$ is equicharacteristic, it has the form $k[x_1,\ldots,x_n]/(x_1,\ldots,x_n)^2$ for some $n\geqslant 0.$

The structure of the lattice $\mathcal{T}=\Thick\dflbcat{R}$ is especially simply when $R$ is also Gorenstein; this forces  $n\leqslant 1$.

If $n=0$, then $R$ is regular and so $\mathcal{T}$ is simply $0 \subsetneq \dflbcat{R}$.

If $n=1,$ then the lattice $\mathcal{T}$ is 
\[
0\subsetneq \thick_R(K)\subsetneq \dflbcat{R}
\] where $K$ is a Koszul complex on any system of parameters for $R$. This is a consequence of \cref{cor:latticeiso} and \cite{Carlson/Iyengar:2015,Liu/Pollitz:2021}.
\end{chunk}
When $R$ is not Gorenstein, $\Thick\dflbcat{R}$ possesses a far wilder structure.

\begin{corollary}
\label{cor:artinian}
Let $R$ be an equicharacteristic Cohen--Macaulay non-Gorenstein ring of minimal multiplicity with infinite residue field $k$.  Then there exists an infinite descending binary tree of finitely generated thick subcategories in $\Thick \dflbcat{R}$. 
\end{corollary}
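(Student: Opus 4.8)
The plan is to reduce, by way of \cref{c:minmult} and \cref{cor:latticeiso}, to a statement about the Artinian ring $k[x_1,\dots,x_n]/(x_1,\dots,x_n)^2$ with $n\ge 2$, and then to invoke the structure of its lattice of thick subcategories.

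First I would fix, using \cref{c:minmult}, an exceptional complete intersection map $\vp\colon R\to S$ with $\fm_S^2=0$; as $S$ is equicharacteristic it has the form $S\cong k[x_1,\dots,x_n]/(x_1,\dots,x_n)^2$ for some $n\ge0$. A complete intersection quotient is Gorenstein if and only if its base is, so $R$ being non-Gorenstein forces $S$ to be non-Gorenstein; since $k$ and $k[x]/(x^2)$ are Gorenstein, this rules out $n\le1$, and hence $n\ge2$.

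Next, \cref{cor:latticeiso} supplies mutually inverse lattice isomorphisms $F$ and $G$ between $\Thick\dflbcat{R}$ and $\Thick\dflbcat{S}$. In either lattice the arbitrary join is $\bigvee_i\cat{T}_i=\thick\bigl(\bigcup_i\cat{T}_i\bigr)$, and since every object is finitely built from finitely many objects, a thick subcategory is compact for this join exactly when it is finitely generated. A lattice isomorphism preserves joins, hence compact elements, and also strict inclusions and incomparability; so $F$ and $G$ transport an infinite descending binary tree of finitely generated thick subcategories to another such tree. It therefore suffices to construct one inside $\Thick\dflbcat{S}$.

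It remains to produce an infinite descending binary tree of finitely generated thick subcategories in $\Thick\dflbcat{S}$ for $S=k[x_1,\dots,x_n]/(x_1,\dots,x_n)^2$ with $n\ge2$. Here $S$ is Koszul with Yoneda algebra $\Ext_S^*(k,k)$ the free associative $k$-algebra on $n\ge2$ generators in cohomological degree $1$, so via Koszul duality $\dflbcat{S}=\thick_S(k)$ is equivalent to the category of perfect complexes over that free algebra; moreover $S$ is not Gorenstein, so the singularity category $\mathsf{D}_{\mathrm{sg}}(S)=\dflbcat{S}/\Perf{S}$ is strictly larger than the stable module category $\underline{\mathrm{mod}}\,S$ (whose thick subcategory lattice is trivial, as $\fm_S^2=0$ makes every syzygy semisimple), and it is in that discrepancy that the rich structure lives. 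The existence of the desired tree then follows from \cite{Carlson/Iyengar:2015,Liu/Pollitz:2021}; the underlying mechanism is a self-similarity, whereby one exhibits a finitely generated thick subcategory $\cat{C}\subseteq\dflbcat{S}$ together with two incomparable, finitely generated thick subcategories $\cat{C}_0,\cat{C}_1\subsetneq\cat{C}$, each triangle-equivalent to $\cat{C}$, and then transports the pair $(\cat{C}_0,\cat{C}_1)$ along the equivalences $\cat{C}\xrightarrow{\ \sim\ }\cat{C}_i$, indexing the resulting subcategories by finite binary words. I expect the construction of such a self-similar configuration to be the main obstacle: one needs the wild---not merely infinite---behavior of $S$, since a support-variety style classification of thick subcategories (available for hypersurfaces, and responsible for the simple lattices in the Gorenstein cases $n\le1$) would branch only finitely often, exhausting the room below the minimal nonzero thick subcategories.
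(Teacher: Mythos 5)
Your reduction matches the paper's: use \cref{c:minmult} to produce the exceptional complete intersection map $\vp\colon R\to S$ with $\fm_S^2=0$, note that $R$ non-Gorenstein forces $n\ge 2$, and transport the problem across the lattice isomorphism of \cref{cor:latticeiso}. Your extra observation that finitely generated thick subcategories are exactly the compact elements of the lattice, hence are preserved by $F$ and $G$ along with strict inclusions, is a correct (and welcome) elaboration of a point the paper leaves implicit.

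The gap is in the only step that carries real content beyond this reduction: the existence of an infinite descending binary tree of finitely generated thick subcategories in $\Thick\dflbcat{S}$ for $S=k[x_1,\dots,x_n]/(x_1,\dots,x_n)^2$ with $n\ge 2$. You assert this ``follows from \cite{Carlson/Iyengar:2015,Liu/Pollitz:2021}'', but those are precisely the references underlying the \emph{Gorenstein} cases $n\le 1$ in \cref{c:minmult}: they give support-theoretic classifications over (complete intersection) hypersurfaces and yield the short chains $0\subsetneq\thick_R(K)\subsetneq\dflbcat{R}$, not wild behavior; they say nothing about the non-Gorenstein ring $S$ with $n\ge 2$. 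The self-similar configuration $\cat{C}_0,\cat{C}_1\subsetneq\cat{C}$ with each $\cat{C}_i\simeq\cat{C}$ that you describe is indeed the right kind of mechanism, but you acknowledge you do not construct it, and without it the proof is incomplete. The paper supplies exactly this missing input by citing Elagin--Lunts \cite[Theorem~A]{Elagin/Lunts:2022}, which (in the Koszul-dual picture you sketch, where $\dflbcat{S}=\thick_S(k)$ corresponds to perfect complexes over the free algebra on $n\ge 2$ generators) produces the required tree of finitely generated thick subcategories; your argument needs either that theorem or a genuine construction in its place.
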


\begin{proof}
The assumptions on $R$ force $S$, from \cref{c:minmult}, to be  $k[x_1,\ldots,x_n]/(x_1,\ldots,x_n)^2$ with $n\geqslant 2$. Now the stated result is a consequence of \cref{cor:latticeiso} and the work of Elagin and Lunts~\cite[Theorem~A]{Elagin/Lunts:2022}.
\end{proof} 

\bibliographystyle{amsplain}
\bibliography{refs}

\end{document}